\newtheorem{axiom}[theorem]{Axiom}
\newtheorem{conjecture}[theorem]{Conjecture}
\newtheorem{example}[theorem]{Example}
\newtheorem{exercise}[theorem]{Exercise}
\newtheorem{remark}[theorem]{Remark}
\providecommand{\BOXEDSPECIAL}[4]{\hbox to #2{\raise #3\hbox to #2{\null #1\hfil}}}
\chardef\@x10\chardef\@xv60
\def\tcitime{
\def\@time{%
  \@minute\time\@hour\@minute\divide\@hour\@xv
  \ifnum\@hour<\@x 0\fi\the\@hour:%
  \multiply\@hour\@xv\advance\@minute-\@hour
  \ifnum\@minute<\@x 0\fi\the\@minute
  }}%
\def\QCTOpt[#1]#2{%
  \def\QCTOptB{#1}
  \def\QCTOptA{#2}
}
\def\QCTNOpt#1{%
  \def\QCTOptA{#1}
  \let\QCTOptB\empty
}
\def\Qct{%
  \@ifnextchar[{%
    \QCTOpt}{\QCTNOpt}
}
\def\QCBOpt[#1]#2{%
  \def\QCBOptB{#1}
  \def\QCBOptA{#2}
}
\def\QCBNOpt#1{%
  \def\QCBOptA{#1}
  \let\QCBOptB\empty
}
\def\Qcb{%
  \@ifnextchar[{%
    \QCBOpt}{\QCBNOpt}
}
\def\PrepCapArgs{%
  \ifx\QCBOptA\empty
    \ifx\QCTOptA\empty
      {}%
    \else
      \ifx\QCTOptB\empty
        {\QCTOptA}%
      \else
        [\QCTOptB]{\QCTOptA}%
      \fi
    \fi
  \else
    \ifx\QCBOptA\empty
      {}%
    \else
      \ifx\QCBOptB\empty
        {\QCBOptA}%
      \else
        [\QCBOptB]{\QCBOptA}%
      \fi
    \fi
  \fi
}
\def\GRAPHICSPS#1{%
 \ifcase\GRAPHICSTYPE
   \special{ps: #1}%
 \or
   \special{language "PS", include "#1"}%
 \fi
}%
\def\graffile#1#2#3#4#5{%
    \bgroup
    \leavevmode
    \@ifundefined{bbl@deactivate}{\def~{\string~}}{\activesoff}
    \raise -#4 \BOXTHEFRAME{%
       \BOXEDSPECIAL{#1}{#2}{#3}{#5}}%
    \egroup
}%
\def\draftbox#1#2#3#4{%
 \leavevmode\raise -#4 \hbox{%
  \frame{\rlap{\protect\tiny #1}\hbox to #2%
   {\vrule height#3 width\z@ depth\z@\hfil}%
  }%
 }%
}%
\newif\ifwasdraft
\def\GRAPHIC#1#2#3#4#5{%
 \ifnum\draft=\@ne\draftbox{#2}{#3}{#4}{#5}%
  \else\graffile{#1}{#3}{#4}{#5}{#2}%
  \fi
 }%
\def\addtoLaTeXparams#1{%
    \edef\LaTeXparams{\LaTeXparams #1}}%
\newif\ifBoxFrame \BoxFramefalse
\newif\ifOverFrame \OverFramefalse
\newif\ifUnderFrame \UnderFramefalse
\def\BOXTHEFRAME#1{%
   \hbox{%
      \ifBoxFrame
         \frame{#1}%
      \else
         {#1}%
      \fi
   }%
}
\def\doFRAMEparams#1{\BoxFramefalse\OverFramefalse\UnderFramefalse\readFRAMEparams#1\end}%
\def\readFRAMEparams#1{%
 \ifx#1\end%
  \let\next=\relax
  \else
  \ifx#1i\dispkind=\z@\fi
  \ifx#1d\dispkind=\@ne\fi
  \ifx#1f\dispkind=\tw@\fi
  \ifx#1t\addtoLaTeXparams{t}\fi
  \ifx#1b\addtoLaTeXparams{b}\fi
  \ifx#1p\addtoLaTeXparams{p}\fi
  \ifx#1h\addtoLaTeXparams{h}\fi
  \ifx#1X\BoxFrametrue\fi
  \ifx#1O\OverFrametrue\fi
  \ifx#1U\UnderFrametrue\fi
  \ifx#1w
    \ifnum\draft=1\wasdrafttrue\else\wasdraftfalse\fi
    \draft=\@ne
  \fi
  \let\next=\readFRAMEparams
  \fi
 \next
 }%
\def\IFRAME#1#2#3#4#5#6{%
      \bgroup
      \let\QCTOptA\empty
      \let\QCTOptB\empty
      \let\QCBOptA\empty
      \let\QCBOptB\empty
      #6%
      \parindent=0pt%
      \leftskip=0pt
      \rightskip=0pt
      \setbox0 = \hbox{\QCBOptA}%
      \@tempdima = #1\relax
      \ifOverFrame
          \typeout{This is not implemented yet}%
          \show\HELP
      \else
         \ifdim\wd0>\@tempdima
            \advance\@tempdima by \@tempdima
            \ifdim\wd0 >\@tempdima
               \textwidth=\@tempdima
               \setbox1 =\vbox{%
                  \noindent\hbox to \@tempdima{\hfill\GRAPHIC{#5}{#4}{#1}{#2}{#3}\hfill}\\%
                  \noindent\hbox to \@tempdima{\parbox[b]{\@tempdima}{\QCBOptA}}%
               }%
               \wd1=\@tempdima
            \else
               \textwidth=\wd0
               \setbox1 =\vbox{%
                 \noindent\hbox to \wd0{\hfill\GRAPHIC{#5}{#4}{#1}{#2}{#3}\hfill}\\%
                 \noindent\hbox{\QCBOptA}%
               }%
               \wd1=\wd0
            \fi
         \else
            \ifdim\wd0>0pt
              \hsize=\@tempdima
              \setbox1 =\vbox{%
                \unskip\GRAPHIC{#5}{#4}{#1}{#2}{0pt}%
                \break
                \unskip\hbox to \@tempdima{\hfill \QCBOptA\hfill}%
              }%
              \wd1=\@tempdima
           \else
              \hsize=\@tempdima
              \setbox1 =\vbox{%
                \unskip\GRAPHIC{#5}{#4}{#1}{#2}{0pt}%
              }%
              \wd1=\@tempdima
           \fi
         \fi
         \@tempdimb=\ht1
         \advance\@tempdimb by \dp1
         \advance\@tempdimb by -#2%
         \advance\@tempdimb by #3%
         \leavevmode
         \raise -\@tempdimb \hbox{\box1}%
      \fi
      \egroup%
}%
\def\DFRAME#1#2#3#4#5{%
 \begin{center}
     \let\QCTOptA\empty
     \let\QCTOptB\empty
     \let\QCBOptA\empty
     \let\QCBOptB\empty
     \ifOverFrame 
        #5\QCTOptA\par
     \fi
     \GRAPHIC{#4}{#3}{#1}{#2}{\z@}
     \ifUnderFrame 
        \nobreak\par\nobreak#5\QCBOptA
     \fi
 \end{center}%
 }%
\def\FFRAME#1#2#3#4#5#6#7{%
 \begin{figure}[#1]%
  \let\QCTOptA\empty
  \let\QCTOptB\empty
  \let\QCBOptA\empty
  \let\QCBOptB\empty
  \ifOverFrame
    #4
    \ifx\QCTOptA\empty
    \else
      \ifx\QCTOptB\empty
        \caption{\QCTOptA}%
      \else
        \caption[\QCTOptB]{\QCTOptA}%
      \fi
    \fi
    \ifUnderFrame\else
      \label{#5}%
    \fi
  \else
    \UnderFrametrue%
  \fi
  \begin{center}\GRAPHIC{#7}{#6}{#2}{#3}{\z@}\end{center}%
  \ifUnderFrame
    #4
    \ifx\QCBOptA\empty
      \caption{}%
    \else
      \ifx\QCBOptB\empty
        \caption{\QCBOptA}%
      \else
        \caption[\QCBOptB]{\QCBOptA}%
      \fi
    \fi
    \label{#5}%
  \fi
  \end{figure}%
 }%
\def\makeactives{
  \catcode`\"=\active
  \catcode`\;=\active
  \catcode`\:=\active
  \catcode`\'=\active
  \catcode`\~=\active
}
   \gdef\activesoff{%
      \def"{\string"}
      \def;{\string;}
      \def:{\string:}
      \def'{\string'}
    }
\def\FRAME#1#2#3#4#5#6#7#8{%
 \bgroup
 \ifnum\draft=\@ne
   \wasdrafttrue
 \else
   \wasdraftfalse%
 \fi
 \def\LaTeXparams{}%
 \dispkind=\z@
 \def\LaTeXparams{}%
 \doFRAMEparams{#1}%
 \ifnum\dispkind=\z@\IFRAME{#2}{#3}{#4}{#7}{#8}{#5}\else
  \ifnum\dispkind=\@ne\DFRAME{#2}{#3}{#7}{#8}{#5}\else
   \ifnum\dispkind=\tw@
    \edef\@tempa{\noexpand\FFRAME{\LaTeXparams}}%
    \@tempa{#2}{#3}{#5}{#6}{#7}{#8}%
    \fi
   \fi
  \fi
  \ifwasdraft\draft=1\else\draft=0\fi{}%
  \egroup
 }%
\def\TEXUX#1{"texux"}
\def\func#1{\mathop{\rm #1}\nolimits}%
\long\def\QQQ#1#2{%
     \long\expandafter\def\csname#1\endcsname{#2}}%
\long\def\QQA#1#2{}%
\newcommand{\QTR}[2]{\csname text#1\endcsname{#2}}
\def\EXPAND#1[#2]#3{}%
\def\NOEXPAND#1[#2]#3{}%
\def\LaTeXparent#1{}%
\def\ChildStyles#1{}%
\def\ChildDefaults#1{}%
\def\QTagDef#1#2#3{}%
  \providecommand{\UNICODE}[2][]{}
\def\QQfnmark#1{\footnotemark}
 \def\abstract{%
  \if@twocolumn
   \section*{Abstract (Not appropriate in this style!)}%
   \else \small 
   \begin{center}{\bf Abstract\vspace{-.5em}\vspace{\z@}}\end{center}%
   \quotation 
   \fi
  }%
   \def\registered{\relax\ifmmode{}\r@gistered
                    \else$\m@th\r@gistered$\fi}%
 \def\r@gistered{^{\ooalign
  {\hfil\raise.07ex\hbox{$\scriptstyle\rm\text{R}$}\hfil\crcr
  \mathhexbox20D}}}}{}%
\newdimen\theight
\def\Column{%
 \vadjust{\setbox\z@=\hbox{\scriptsize\quad\quad tcol}%
  \theight=\ht\z@\advance\theight by \dp\z@\advance\theight by \lineskip
  \kern -\theight \vbox to \theight{%
   \rightline{\rlap{\box\z@}}%
   \vss
   }%
  }%
 }%
\def\qed{%
 \ifhmode\unskip\nobreak\fi\ifmmode\ifinner\else\hskip5\p@\fi\fi
 \hbox{\hskip5\p@\vrule width4\p@ height6\p@ depth1.5\p@\hskip\p@}%
 }%
\def\miss{\hbox{\vrule height2\p@ width 2\p@ depth\z@}}%
\def\tcol#1{{\baselineskip=6\p@ \vcenter{#1}} \Column}  %
\def\newfmtname{LaTeX2e}
  \DeclareOldFontCommand{\rm}{\normalfont\rmfamily}{\mathrm}
  \DeclareOldFontCommand{\sf}{\normalfont\sffamily}{\mathsf}
  \DeclareOldFontCommand{\tt}{\normalfont\ttfamily}{\mathtt}
  \DeclareOldFontCommand{\bf}{\normalfont\bfseries}{\mathbf}
  \DeclareOldFontCommand{\it}{\normalfont\itshape}{\mathit}
  \DeclareOldFontCommand{\sl}{\normalfont\slshape}{\@nomath\sl}
  \DeclareOldFontCommand{\sc}{\normalfont\scshape}{\@nomath\sc}
  \newcounter{equationnumber}  
  \def\mathletters{%
     \addtocounter{equation}{1}
     \edef\@currentlabel{\theequation}%
     \setcounter{equationnumber}{\c@equation}
     \setcounter{equation}{0}%
     \edef\theequation{\@currentlabel\noexpand\alph{equation}}%
  }
    \def\BibTeX{{\rm B\kern-.05em{\sc i\kern-.025em b}\kern-.08em
                 T\kern-.1667em\lower.7ex\hbox{E}\kern-.125emX}}}{}%
\def\AmS{{\protect\usefont{OMS}{cmsy}{m}{n}%
                A\kern-.1667em\lower.5ex\hbox{M}\kern-.125emS}}}{}%
\def\@@eqncr{\let\@tempa\relax
    \ifcase\@eqcnt \def\@tempa{& & &}\or \def\@tempa{& &}%
      \else \def\@tempa{&}\fi
     \@tempa
     \if@eqnsw
        \iftag@
           \@taggnum
        \else
           \@eqnnum\stepcounter{equation}%
        \fi
     \fi
     \global\tag@false
     \global\@eqnswtrue
     \global\@eqcnt\z@\cr}
\def\TCItag{\@ifnextchar*{\@TCItagstar}{\@TCItag}}
\def\@TCItag#1{%
    \global\tag@true
    \global\def\@taggnum{(#1)}}
\def\@TCItagstar*#1{%
    \global\tag@true
    \global\def\@taggnum{#1}}
\begin{document}

\title{	Analysis of the effect of Time Filters on the implicit method: increased accuracy and improved stability}
\author{Ahmet Guzel\thanks{%
Department of \ Mathematics, University of \ Pittsburgh, Pittsburgh, PA,
15260, USA; ahg13@pitt.edu; Partially supported by NSF Grant DMS 1522574}
and \and  William Layton\thanks{%
Department of \ Mathematics, University of \ Pittsburgh, Pittsburgh, PA,
15260, USA; wjl@ pitt.edu, http://www.math.pitt.edu/\symbol{126}wjl;
Partially supported by NSF Grant DMS 1522267 and NSF CBET-CDS\&E grant
1609120}}
\date{11 October 2005}
\maketitle

\begin{abstract}
This report considers linear multistep methods through time filtering. The
approach has several advantages. It is modular and requires the addition of
only one line of additional code. Error estimation and variable timesteps is
straightforward and the individual effect of each step\ is conceptually
clear. We present its development for the backward Euler method and a
curvature reducing time filter leading to a 2-step, strongly A-stable,
second order linear multistep method.
\end{abstract}

\keyphrases{time filter, linear multistep method}
\AMclass{1234.56}

\section{Introduction} \hfill \\

The fully implicit/backward Euler method is commonly the first method
implemented when extending a code for the steady state problem and often the
method of last resort for complex applications. The issue can then arise of
how to increase numerical accuracy in a complex, possibly legacy code
without implementing from scratch another, better method. We show herein
that adding one line, a curvature reducing time filter, increases accuracy
from first to second order, gives an immediate error estimator and induces a
method akin to BDF2. In the 2-step combination the effect of each step is
conceptually clear and immediately adapts to variable timesteps.

To begin, consider the initial value problem%
\begin{equation*}
y^{\prime }(t)=f(t,y(t)),y(0)=y_{0}.
\end{equation*}%
Denote the $n^{th}$ timestep by $k_{n}$. Let $t_{n+1}=t_{n}+k_{n}$, $\tau
=k_{n}/k_{n-1},$ $\nu $ be an algorithm parameter and $y_{n}$ an
approximation to $y(t_{n})$. Discretize this by the standard backward Euler
(fully implicit) method followed by a simple time filter (next for constant
timestep)%
\begin{equation}
\begin{array}{ccc}
\text{Step 1} & : & \frac{y_{n+1}-y_{n}}{k}=f(t_{n+1},y_{n+1}), \\ 
&  &  \\ 
\text{Step 2} & : & y_{n+1}\Leftarrow y_{n+1}-\frac{\nu }{2}\left\{
y_{n+1}-2y_{n}+y_{n-1}\right\} .%
\end{array}
\label{eq:methodConstantTimestep}
\end{equation}%
Step 2 is the only $3-$point filter for which the combination of backward
Euler plus a time filter produces a consistent approximation. The
combination is second order accurate for $\nu =+2/3$, Proposition 2.1.
Proposition 2.2 establishes that the combination is $0-$stable for $-2\leq
\nu <+2$, unstable otherwise and $A-$stable for $-2/3\leq \nu \leq +2/3$.\
Since Step 2 with $\nu =+2/3$ has greater accuracy than Step 1, the pre- and
post- filter difference 
\begin{equation} \label{Eq:Estimator}
EST=|y_{n+1}^{prefilter}-y_{n+1}^{postfilter}|
\end{equation}%
can be used in a standard way to estimate the error in the method and adapt
the timestep.

The variable timestep case is considered in Section 3 based on a definition
of discrete curvature and a curvature reducing discrete filter, Step 2 in (%
\ref{eq:VariableMethod}):%
\begin{equation}
\begin{array}{ccc}
\text{Step 1} & : & \frac{y_{n+1}-y_{n}}{k_{n}}=f(t_{n+1},y_{n+1}), \\ 
\\
\text{Step 2} & : & y_{n+1}\Leftarrow y_{n+1}-\frac{\nu }{2}\left\{ \frac{%
2k_{n-1}}{k_{n}+k_{n-1}}y_{n+1}-2y_{n}+\frac{2k_{n}}{k_{n}+k_{n-1}}%
y_{n-1}\right\} .%
\end{array}
\label{eq:VariableMethod}
\end{equation}%
For variable timestep, the choice of $\nu $ for second order accuracy
depends on $\tau $ and is $\nu =\frac{\tau (1+\tau )}{(1+2\tau )}$, Proposition 3.3.
The filter step reduces the discrete curvature, Definition 3.1, at the three
points $(t_{n+1},y_{n+1})$, $(t_{n},y_{n})$, $(t_{n-1},y_{n-1})$,
Proposition 3.1, provided
\begin{align*} 
0<\nu <1+k_{n}/k_{n-1}.
\end{align*}

For constant time step, the special value $\nu =2/3$ induces a one-leg, two
step method\footnote{%
This $2-$step method seems to be new\ in the sense that, while for the
special value $\nu =2/3$ the LHS is the same as BDF2 for both constant and
variable timesteps, the RHS, as well as the approach to \ implementation,
seems to be new.} that is second order accurate and strongly $A-$stable,
given by%
\begin{equation}
\frac{3}{2}y_{n+1}-2y_{n}+\frac{1}{2}y_{n-1}=kf(t_{n+1},\frac{3}{2}%
y_{n+1}-y_{n}+\frac{1}{2}y_{n-1}).  \label{eq:LMMconstant}
\end{equation}%
\ For general $\nu $ and variable timestep the equivalent linear multistep
method is (\ref{eq:LMMvariable}). The LHS of (\ref{eq:LMMconstant}) is the
same as BDF2. The RHS differs from BDF2 by 
\begin{equation*}
\frac{3}{2}y(t_{n+1})-y(t_{n})+\frac{1}{2}y(t_{n-1})=\ y(t_{n+1})+\mathcal{O}%
(k^{2}),
\end{equation*}%
as required for second order accuracy.

\begin{remark}
The filter value $\nu =2$ in (\ref{eq:methodConstantTimestep}) is not good
since it forces $y_{n+1}$ to be the linear extrapolation of $y_{n},y_{n-1}$.
Thus we always assume $\nu \neq 2.$ The filter can also be repeated several
times (but not iterated to convergence). Filtering twice is equivalent to
increasing the value of the filter parameter $\nu \rightarrow \nu (2-\frac{%
\nu }{2})$ and filtering once.

Time filters centered at $t_{n}$ rather than $t_{n+1}$, are often used in
geophysical fluid dynamics simulations with the leapfrog integrator to
reduce oscillations in the computed solution, Asselin \cite{A72}, Robert \cite{R69}, Williams \cite%
{W11}. As a related example, the Robert-Asselin filter is commonly used and
given by%
\begin{equation*}
y_{n}\Leftarrow y_{n}+\frac{\nu }{2}\left\{ y_{n+1}-2y_{n}+y_{n-1}\right\}
,\nu \simeq 0.1.
\end{equation*}%
The extension of the RA filter to variable timesteps based on Section 3.1 is%
\begin{equation*}
y_{n}\Leftarrow y_{n}+\frac{\nu }{2}\left\{ \frac{2k_{n-1}}{k_{n}+k_{n-1}}%
y_{n+1}-2y_{n}+\frac{2k_{n}}{k_{n}+k_{n-1}}y_{n-1}\right\} .
\end{equation*}%
For a one step method, filters centered at $t_{n},$ like the Robert-Asselin
filter, postprocess the computed solution but do not alter the evolution of
the approximate solution. For that reason the filter is shifted to $t_{n+1}$
herein.
\end{remark}

\section{Constant timestep} \hfill \\

We develop the properties of the method for constant time step in this
section.

\subsection{Derivation of the method} 

Denote the pre-filtered value $y_{n+1}^{\ast }$. Consider backward Euler
plus a general, $3-$point time filter%
\begin{equation}
\begin{array}{ccc}
\text{Step 1} & : & \frac{y_{n+1}^{\ast }-y_{n}}{k}=f(t_{n+1},y_{n+1}^{\ast}), \\ 
\\
\text{Step 2} & : & y_{n+1}=y_{n+1}^{\ast }+\{ay_{n+1}^{\ast
}+by_{n}+cy_{n-1}\}.%
\end{array}%
\end{equation}%
Eliminating the intermediate value $y_{n+1}^{\ast }$, Steps 1 and 2 induce
an equivalent 2-step method for the post-filtered values.

We prove the following.

\begin{proposition}
Let the time step be constant. The combination backward Euler plus time
filter is consistent if and only if the filter coefficients are:%
\begin{equation*}
a=-\frac{\nu }{2},c=-\frac{\nu }{2},b=\nu ,
\end{equation*}%
for some $\nu \neq 2$, and the filter is thus%
\begin{equation}
\ y_{n+1}=\ y_{n+1}^{\ast }-\frac{\nu }{2}\ \left( y_{n+1}^{\ast }-2\
y_{n}+\ y_{n-1}\right) .  \label{eq:FilterConstantStep}
\end{equation}%
In this case the equivalent $2-$step method is 
\begin{gather}
\frac{1}{1-\frac{\nu }{2}}y_{n+1}-\frac{1+\frac{\nu }{2}}{1-\frac{\nu }{2}}%
y_{n}+\frac{\frac{\nu }{2}}{1-\frac{\nu }{2}}y_{n-1}= \\
=kf(t_{n+1},\frac{1}{1-\frac{\nu }{2}}y_{n+1}-\frac{\nu }{1-\frac{\nu }{2}}%
y_{n}+\frac{\frac{\nu }{2}}{1-\frac{\nu }{2}}y_{n-1}).  \notag
\end{gather}%
The combination of is second order accurate if and only if%
\begin{equation*}
\nu =+\frac{2}{3}.
\end{equation*}
\end{proposition}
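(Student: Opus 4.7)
The plan is to reduce the two-step scheme to a single linear multistep method by eliminating the intermediate variable $y_{n+1}^{\ast}$, then perform Taylor expansion around $t_{n+1}$ and read off the consistency and second-order accuracy conditions from the coefficients.

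First I would invert Step 2 to express $y_{n+1}^{\ast} = (y_{n+1} - b y_n - c y_{n-1})/(1+a)$ (tacitly assuming $1+a \neq 0$, which later forces the $\nu \neq 2$ hypothesis) and substitute into Step 1. This produces the equivalent one-leg form
$$ y_{n+1} - (1+a+b)\, y_n - c\, y_{n-1} \;=\; (1+a)\, k\, f\!\left(t_{n+1},\, \hat{y}\right),\qquad \hat{y} = \frac{y_{n+1} - b y_n - c y_{n-1}}{1+a}. $$
With this in hand, the rest of the proof reduces to a truncation-error computation.

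Next I would substitute the exact solution $y(t_j)$ into each $y_j$ and Taylor expand around $t_{n+1}$, writing $y, y', y''$ for values at $t_{n+1}$. The left-hand side gives
$$ \bigl[1 - (1+a+b) - c\bigr]\,y \;+\; \bigl[(1+a+b) + 2c\bigr]\,k y' \;+\; O(k^2), $$
and the argument $\hat{y}$ expands as $\hat{y} = \frac{1-b-c}{1+a}\,y + \frac{b+2c}{1+a}\,k y' + O(k^2)$. Matching the constant term forces $1-(1+a+b)-c=0$, i.e.\ $a+b+c=0$; this simultaneously makes $(1-b-c)/(1+a) = 1$, so $\hat{y}=y+O(k)$ and hence $f(t_{n+1},\hat{y}) = y' + O(k)$. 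Matching the $ky'$ coefficient then demands $(1+a+b) + 2c = 1+a$, which under $a+b+c=0$ collapses to $a=c$. Setting $\nu := -2a$ yields $a=c=-\nu/2$ and $b=\nu$, proving the consistency characterization and, by direct substitution, the explicit $2$-step form stated in the proposition.

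Finally, for the second-order claim I would push the Taylor expansion one order further using the now-fixed coefficients. The LHS becomes
$$ k y'(1 - \tfrac{\nu}{2}) \;-\; \tfrac{k^2}{2} y''\bigl(1 - \tfrac{3\nu}{2}\bigr) \;+\; O(k^3). $$
The critical structural observation is that with $b+2c = \nu - \nu = 0$, the $O(k)$ term in $\hat{y}-y$ vanishes, so $\hat{y}-y = O(k^2)$ and therefore $f(t_{n+1},\hat{y}) = y' + O(k^2)$. Multiplying by the outer $(1+a)k = (1-\nu/2)k$, the RHS is $(1-\nu/2)k y' + O(k^3)$ and has no $k^2$ contribution at all. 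Thus the truncation error at order $k^2$ is exactly $-\tfrac{k^2}{2}y''(1-\tfrac{3\nu}{2})$, which vanishes for every smooth $y$ if and only if $\nu = 2/3$.

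The one step that needs real care, and is the main potential pitfall, is the expansion of the nonlinear term $f(t_{n+1},\hat{y})$: one must verify that the $O(k)$ perturbation in $\hat{y}$ does not contaminate the $k^2$ balance. This is precisely where the cancellation $b+2c=0$ (a consequence of $a=c$) does the heavy lifting, so that no Jacobian $f_y$ contribution enters before $O(k^3)$ and the $\nu = 2/3$ condition comes entirely from the pure-polynomial LHS expansion.
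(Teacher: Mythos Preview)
Your proposal is correct and follows essentially the same route as the paper: eliminate $y_{n+1}^{\ast}$ to obtain the equivalent one-leg two-step method, then read off consistency and second-order conditions from the Taylor expansion of the local truncation error. The only cosmetic differences are that the paper expands about $t_n$ and quotes the standard LMM coefficient conditions $\alpha_2+\alpha_1+\alpha_0=0$, $\alpha_2-\alpha_0-(\beta_2+\beta_1+\beta_0)=0$, $\tfrac{1}{2}\alpha_2+\tfrac{1}{2}\alpha_0-\beta_2+\beta_0=0$ directly, whereas you expand about $t_{n+1}$ and derive the conditions from scratch, taking explicit care that the $O(k)$ perturbation in the one-leg argument $\hat y$ vanishes (your $b+2c=0$ observation) so that no $f_y$ term contaminates the $k^2$ balance; the paper sidesteps this by invoking the ready-made one-leg LTE formula.
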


\begin{proof}
Eliminating $y_{n+1}^{\ast }$ in Step 1 using 
\begin{equation*}
y_{n+1}^{\ast }=\frac{1}{1+a}\left( y_{n+1}-by_{n}-cy_{n-1}\right)
\end{equation*}%
yields an equivalent one-leg linear multistep method for the post-filtered
values 
\begin{equation*}
\frac{1}{1+a}y_{n+1}-\frac{1+a+b}{1+a}y_{n}-\frac{c}{1+a}y_{n-1}=kf(t_{n+1},%
\frac{1}{1+a}y_{n+1}-\frac{b}{1+a}y_{n}-\frac{c}{1+a}y_{n-1}).
\end{equation*}%
In terms of the standard description of a general $2-$step method, the
coefficients are%
\begin{equation*}
\begin{array}{ccc}
\alpha _{2}=\frac{1}{1+a} & , & \beta _{2}=\frac{1}{1+a} \\ 
\alpha _{1}=-\frac{1+a+b}{1+a} & , & \beta _{1}=-\frac{b}{1+a} \\ 
\alpha _{0}=-\frac{c}{1+a} & , & \beta _{0}=-\frac{c}{1+a}.%
\end{array}%
\end{equation*}%
The method is consistent if and only if the first two terms in the method's
LTE expansion are zero and second order accurate if and only if the third
term vanishes. Consistency thus requires%
\begin{equation*}
consistent\Leftrightarrow \left\{ 
\begin{array}{ccc}
&\alpha _{2}+\alpha _{1}+\alpha _{0}=0 \\
& \Updownarrow \\
& a+b+c=0, \\ 
\\
&\alpha _{2}\ -\alpha _{0}\ -(\beta _{2}\ +\beta _{1}\ +\beta _{0}\ )=0 \\
& \Updownarrow \\
& a=c.%
\end{array}%
\right.
\end{equation*}%
Thus for the method to be consistent%
\begin{equation*}
a+b+c=0,a=c,b=-2a
\end{equation*}%
and the first claim follows. The second claim follows by inserting these
values for $a,b,c$.

The condition for second order accuracy is%
\begin{gather*}
\frac{1}{2}\alpha _{2}\ +\frac{1}{2}\alpha _{0}\ -\beta _{2}\ +\beta
_{0}=0\\
\Updownarrow \\
\frac{1}{2}\cdot 1+\frac{1}{2}(-c)-1+(-c)=0\\
\Updownarrow \\
c=-\frac{1}{3}.
\end{gather*}%
These values correspond, as claimed, to $\nu =\frac{2}{3}$ and%
\begin{equation*}
\ y_{n+1}=\ y_{n+1}^{\ast }-\frac{1}{3}\ \left( y_{n+1}^{\ast }-2\ y_{n}+\
y_{n-1}\right) .
\end{equation*}
\end{proof}

\subsection{Stability}

We analyze stability for constant timestep. Consider%
\begin{eqnarray*}
\frac{y_{n+1}^{\ast }-y_{n}}{k} &=&f(t_{n+1},y_{n+1}^{\ast }), \\
\\
y_{n+1} &=&y_{n+1}^{\ast }-\frac{\nu }{2}\left\{ y_{n+1}^{\ast
}-2y_{n}+y_{n-1}\right\} .
\end{eqnarray*}%
The equivalent linear multistep method is%
\begin{gather}
\frac{1}{1-\frac{\nu }{2}}y_{n+1}-\frac{1+\frac{\nu }{2}}{1-\frac{\nu }{2}}%
y_{n}+\frac{\frac{\nu }{2}}{1-\frac{\nu }{2}}y_{n-1}=  \label{eq:LMMvariable}
\\
=kf(t_{n+1},\frac{1}{1-\frac{\nu }{2}}y_{n+1}-\frac{\nu }{1-\frac{\nu }{2}}%
y_{n}+\frac{\frac{\nu }{2}}{1-\frac{\nu }{2}}y_{n-1}).  \notag
\end{gather}%
This corresponds to%
\begin{equation*}
\begin{array}{ccc}
\alpha _{2}=\frac{1}{1-\frac{\nu }{2}} & , & \beta _{2}=\frac{1}{1-\frac{\nu 
}{2}} \\ 
\\
\alpha _{1}=-\frac{1+\frac{\nu }{2}}{1-\frac{\nu }{2}} & , & \beta _{1}=-%
\frac{\nu }{1-\frac{\nu }{2}} \\ 
\\
\alpha _{0}=\frac{\frac{\nu }{2}}{1-\frac{\nu }{2}} & , & \beta _{0}=\frac{%
\frac{\nu }{2}}{1-\frac{\nu }{2}}.%
\end{array}%
\end{equation*}%
There are various places where $A-$stable $2-$step methods are characterized
in terms of their coefficients, e.g., Dahlquist \cite{D78,D79}, Dahlquist, Liniger and Nevanlinna \cite{DLN83}, 
Grigorieff \cite{G83}, Nevanlinna \cite{N84}. We shall apply the characterization (for variable
timesteps) in Dahlquist \cite{D79}, Lemma 4.1 page 3, 4 (specifically rearranging the
equation on page 4 following (4.1)), which states that the method is $A-$%
stable if%
\begin{equation}
\begin{cases}
-\alpha _{1}\geq 0,\\
\\
1-2\beta _{1}\geq 0\text{ and }\\
\\
2(\beta _{2}\ -\beta
_{0})+\alpha _{1}\geq 0.  \label{eq:AstableConditions}
\end{cases}
\end{equation}

\begin{proposition}
The method (\ref{eq:LMMvariable}) is $0-$stable for 
\begin{equation*}
-2\leq \nu <2
\end{equation*}%
and $0-$unstable otherwise. Let $-2\leq \nu <2$. The method is $A-$stable
for 
\begin{equation*}
-\frac{2}{3}\leq \nu \leq +\frac{2}{3}.
\end{equation*}
\end{proposition}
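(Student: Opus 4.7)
The plan is to split the claim into two parts and handle them by direct computation, using the characteristic polynomial for the $0$-stability claim and the pre-stated Dahlquist criterion (\ref{eq:AstableConditions}) for $A$-stability.

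First I would clear denominators in the $\alpha_i$ coefficients. The first characteristic polynomial of (\ref{eq:LMMvariable}) is, up to the nonzero scalar $1/(1-\nu/2)$, simply
\begin{equation*}
\rho(\zeta) \;=\; \zeta^{2} \;-\; \bigl(1+\tfrac{\nu}{2}\bigr)\zeta \;+\; \tfrac{\nu}{2} \;=\; (\zeta - 1)\bigl(\zeta - \tfrac{\nu}{2}\bigr),
\end{equation*}
so the two roots are $\zeta_{1}=1$ and $\zeta_{2}=\nu/2$. From Dahlquist's root condition, $0$-stability holds iff both roots have modulus at most one and any root on the unit circle is simple. The root $\zeta_{1}=1$ is always present, and $|\zeta_{2}|\le 1$ iff $-2\le\nu\le 2$; simplicity at $\zeta_{1}=1$ forces $\nu/2\neq 1$, i.e.\ $\nu\neq 2$ (which is the excluded degenerate case flagged in the earlier Remark). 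Combining these yields the stated range $-2\le\nu<2$, and shows $0$-instability otherwise.

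For $A$-stability I would restrict to the $0$-stable range $-2\le\nu<2$, so that $1-\nu/2>0$ and common denominators can be cleared without flipping signs. Plugging the coefficients $\alpha_1,\beta_0,\beta_1,\beta_2$ from the excerpt into the three Dahlquist inequalities (\ref{eq:AstableConditions}) and simplifying gives
\begin{equation*}
-\alpha_{1}=\frac{1+\nu/2}{1-\nu/2},\qquad 1-2\beta_{1}=\frac{1+3\nu/2}{1-\nu/2},\qquad 2(\beta_{2}-\beta_{0})+\alpha_{1}=\frac{1-3\nu/2}{1-\nu/2}.
\end{equation*}
Since the common denominator $1-\nu/2$ is positive on the relevant interval, the three conditions reduce to $\nu\ge -2$, $\nu\ge -2/3$, and $\nu\le 2/3$ respectively. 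The intersection with $-2\le\nu<2$ is exactly $-2/3\le \nu\le 2/3$, as claimed.

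No step here is really an obstacle; the work is routine algebra once the characteristic polynomial is factored and the Dahlquist criterion is invoked. The only subtlety worth emphasizing in the writeup is keeping track of the sign of $1-\nu/2$ when clearing denominators in the $A$-stability inequalities, and being careful that $\nu=2$ is excluded both because $\alpha_2$ blows up and because $\rho$ then has a double root at $\zeta=1$, violating the root condition.
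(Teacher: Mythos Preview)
Your proposal is correct and follows essentially the same approach as the paper: factor the characteristic polynomial to obtain roots $1$ and $\nu/2$ for the $0$-stability claim, then restrict to $-2\le\nu<2$ so the common denominator $1-\nu/2$ is positive and verify the three Dahlquist inequalities (\ref{eq:AstableConditions}) one by one. The only cosmetic difference is that you present each inequality already simplified as a single fraction, whereas the paper carries out the algebra line by line; the content and sequence of the argument are the same.
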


\begin{proof}
For $0-$stability, the associated polynomial is%
\begin{eqnarray*}
&\frac{1}{1-(\nu /2)}z^{2}-\frac{1+(\nu /2)}{1-(\nu /2)}z+\frac{(\nu /2)}{%
1-(\nu /2)} =0 \\
\\
&\Updownarrow\\
\\
&z^{2}-\left( 1+(\nu /2)\right) z+(\nu /2) =0.
\end{eqnarray*}%
Its roots are 
\begin{equation*}
z_{\pm }=1,\frac{\nu }{2},
\end{equation*}%
from which $0-$stability follows for $-2\leq \nu <2$.

To show $A-$stability we apply the characterization (\ref%
{eq:AstableConditions}). Due to the $0-$stability result, restrict to values 
$-2\leq \nu <2$ for which the denominator $$1-\frac{\nu}{2}>0.$$ The first of the
three conditions is%
\begin{gather*}
-\alpha _{1}\geq 0 \\
\Updownarrow \\
\frac{1+\frac{\nu }{2}}{1-\frac{\nu }{2}}%
\geq 0\\
\Updownarrow \\
\nu \geq -2.
\end{gather*}%
The second is%
\begin{gather*}
1-2\beta _{1}\geq 0 \\
\Updownarrow \\
1-2\left( -\frac{\nu }{1-\frac{\nu }{2}}%
\right) \geq 0 \\
\Updownarrow \\
1+\frac{3\nu }{2}\geq 0\\
\Updownarrow \\
\nu \geq -\frac{2}{3}.
\end{gather*}%
The third condition is%
\begin{gather*}
2(\beta _{2}\ -\beta _{0})+\alpha _{1}\geq 0 \\
\Updownarrow \\
 2\left( \frac{1}{%
1-\frac{\nu }{2}}-\frac{\frac{\nu }{2}}{1-\frac{\nu }{2}}\right) -\frac{1+%
\frac{\nu }{2}}{1-\frac{\nu }{2}}\geq 0 \\
\Updownarrow\\
 2\left( 1-\frac{\nu }{2}\right) -1-\frac{\nu }{2}\geq0\\
 \Updownarrow \\
  \nu \leq \frac{2}{3}.
\end{gather*}
\end{proof}

For the interesting choice $\nu =+2/3$ we have computed the stability region
of the induced $2-$step method by the root locus method and present it next
in Figure \ref{Fig:stabregionbeplusfilter}.

\begin{figure}[!htb]
	\centering
	\includegraphics[width=12cm, height=8cm]{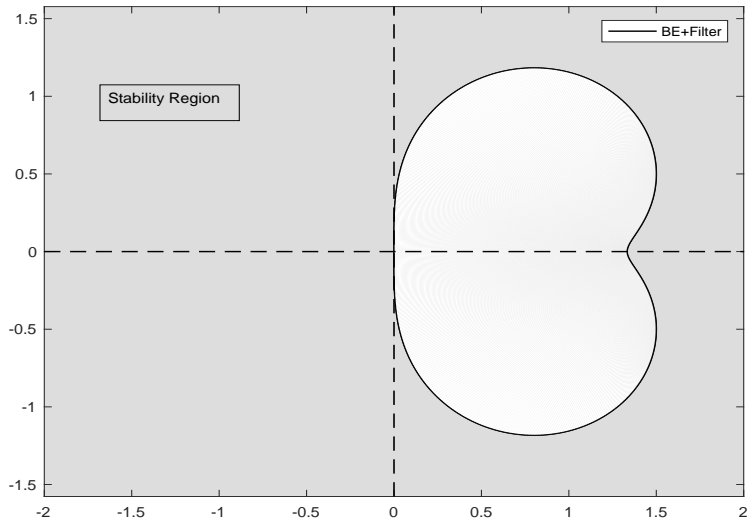}\hspace{-1.em}
	\caption{Stability region of Backward euler plus time filter}
	\label{Fig:stabregionbeplusfilter}
\end{figure}

For comparison, the stability regions of BE and BDF2 follow in Figure \ref{Fig:StabRegion}.%
\begin{figure}[!htb]
	\centering
	\includegraphics[width=6cm, height=6cm]{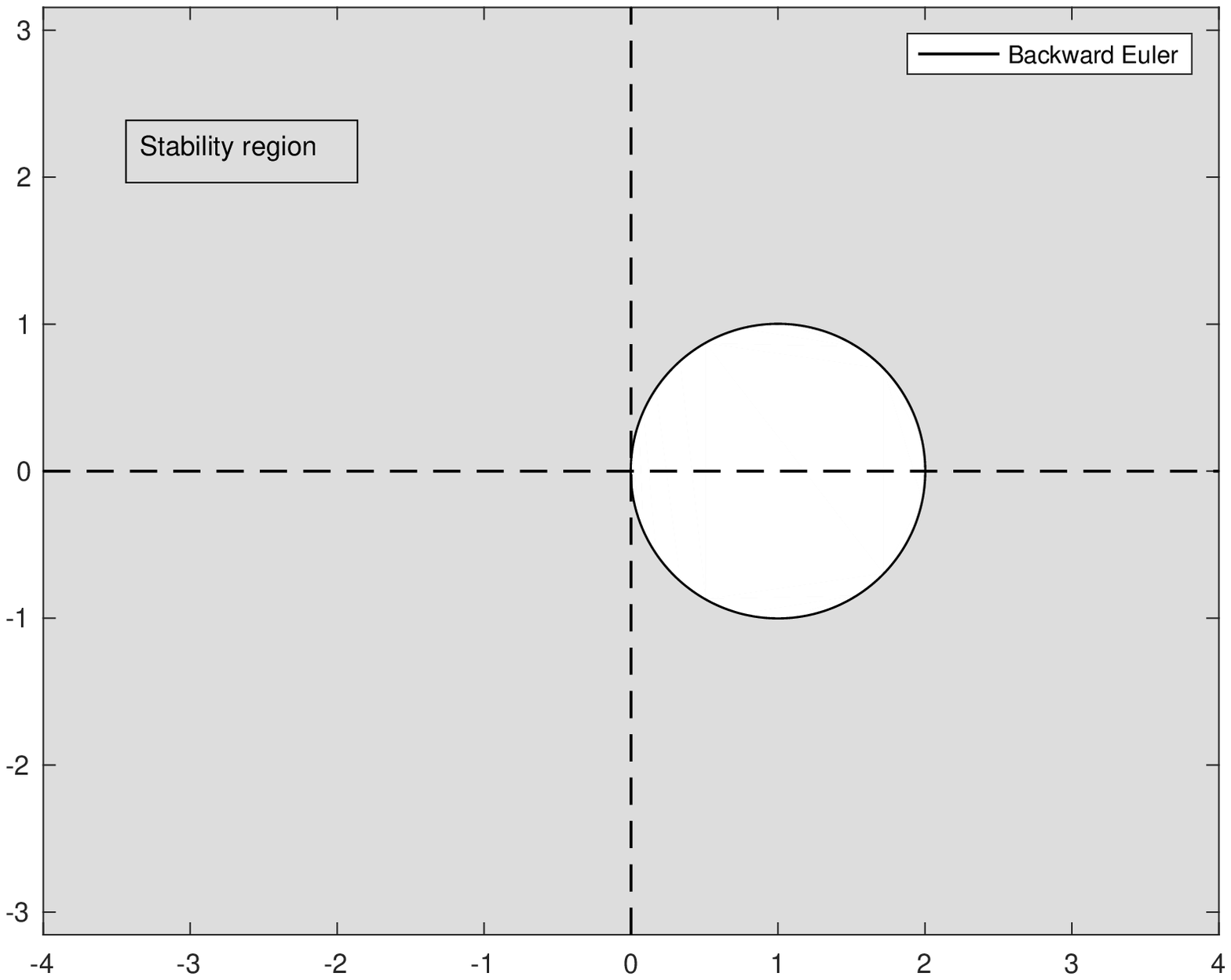} \hspace{-1.em}
	\includegraphics[width=6cm, height=6cm]{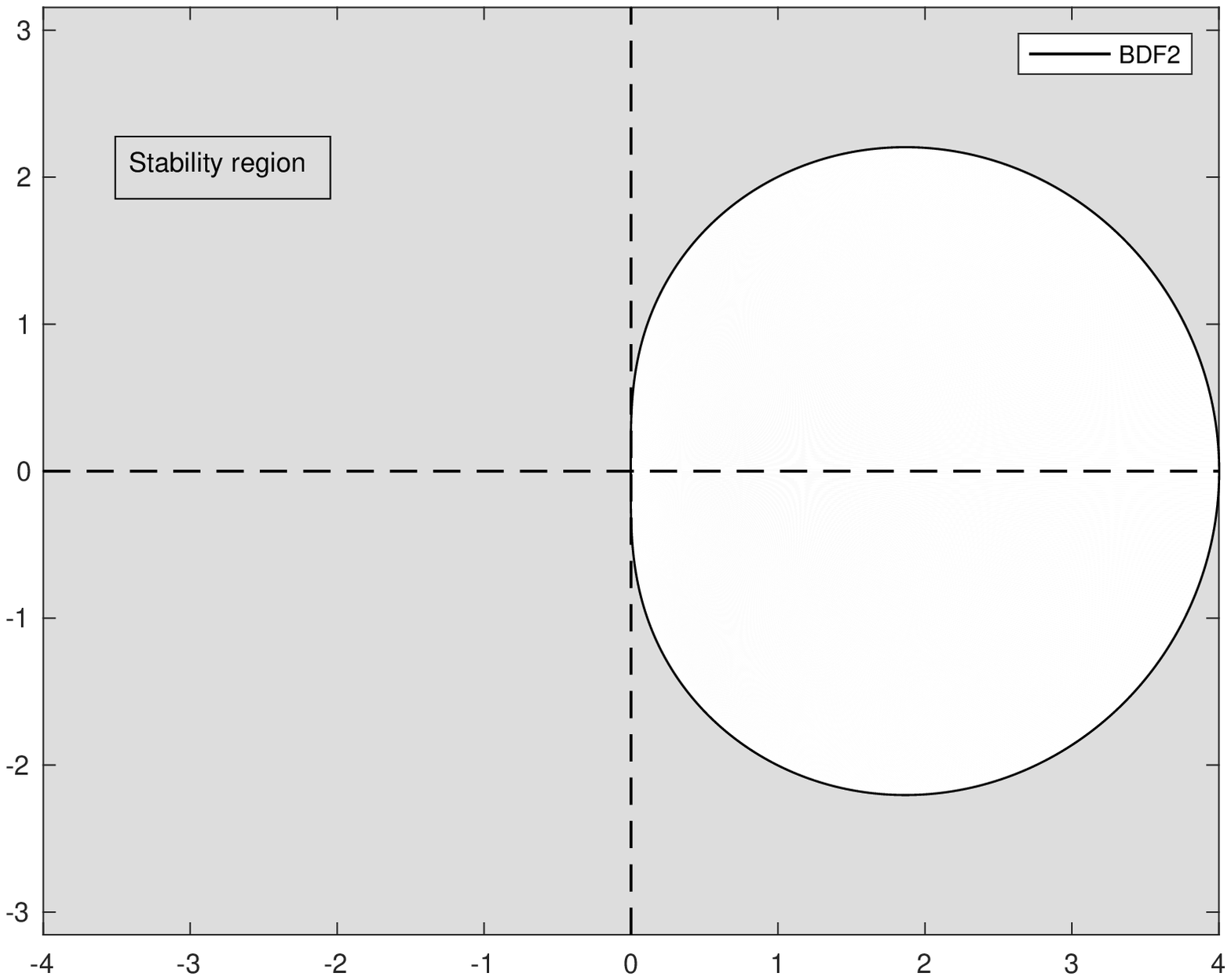} \hspace{-1.em}
	\caption{Stability region of Backward euler(left) and BDF2(right).}
	\label{Fig:StabRegion}
\end{figure}
The boundaries of the three stability regions are presented next in Figure %
\ref{Fig:StabRegionComparisons}.
\begin{figure}[!htb]
	\centering
	\includegraphics[width=12cm, height=8cm]{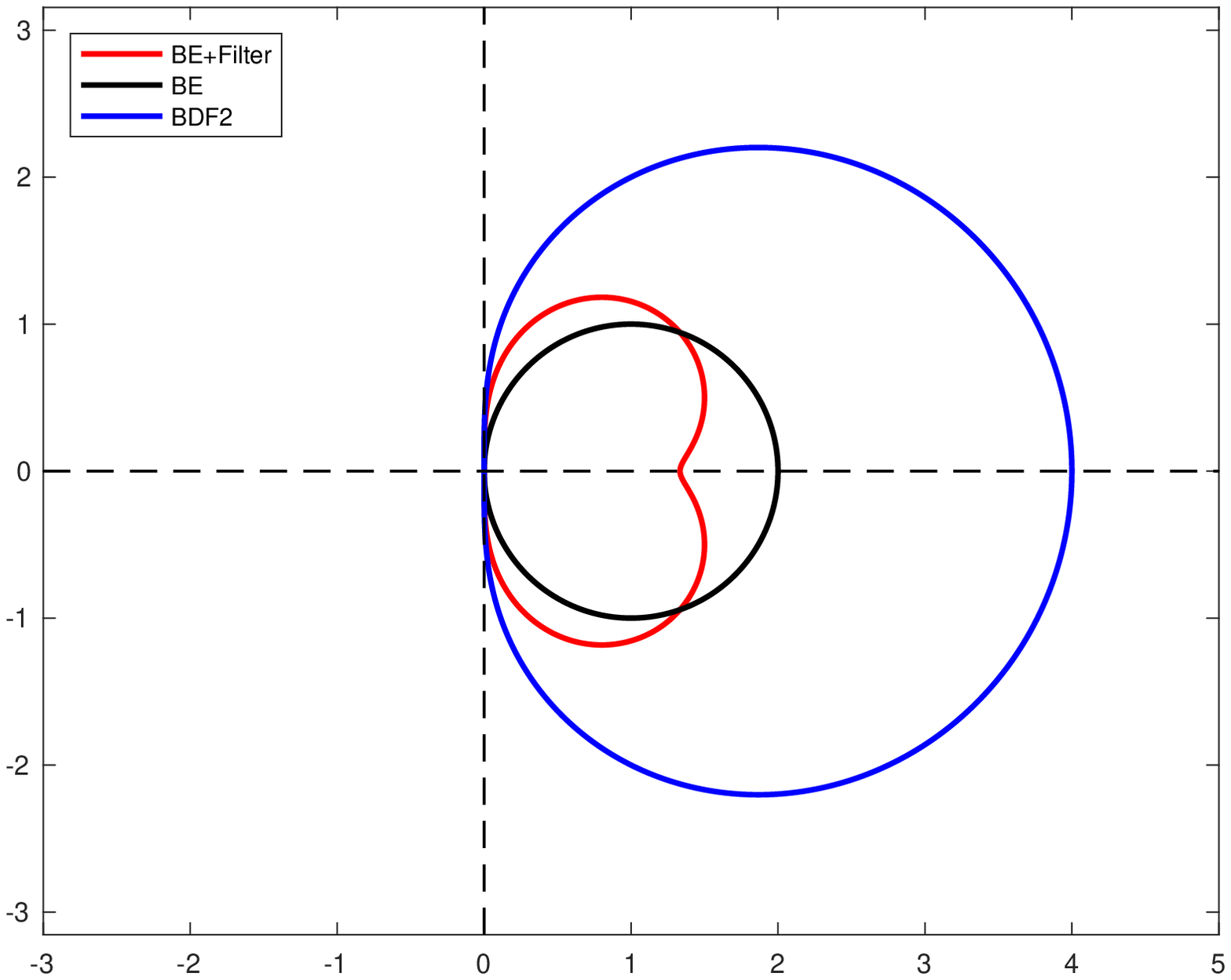} \hspace{-1.em}
	\caption{Boundaries of Stability Regions}
	\label{Fig:StabRegionComparisons}
\end{figure}
\begin{remark}
The stability region of the new method is\ larger than that of BDF2
suggesting the new method is somewhat more dissipative than BDF2. This is
consistent with the numerical results in Section 4.
\end{remark}

\section{Variable Timestep} \hfill \\

Since the implicit method is a one step method the key is to extend time
filters to variable timesteps. We begin.

\subsection{Time Filters on Nonuniform Meshes} 

To extend time filters to nonuniform timesteps we must first define the
discrete curvature. The extension of differential geometry to discrete
settings is an active research fields with considerable work on discrete
curvature, e.g., Najman \cite{N17}. For 3 points the natural definitions are either
the discrete second difference or the inverse of the radius of the
interpolating circle. Consistent with work in GFD, we employ the former
scaled by $k_{n-1}k_{n}$, e.g., Williams \cite{W11}, Kalnay \cite{K03}. Consider the points%
\begin{equation*}
(t_{n-1},y_{n-1}),(t_{n},y_{n}),(t_{n+1},y_{n+1}).
\end{equation*}%
Let the Lagrange basis functions for these three points be denoted $$%
\ell_{n-1}(t),\text{ } \ell_{n}(t),\text{ }  \ell_{n+1}(t).$$ The quadratic interpolant at the
three points is then%
\begin{equation*}
\phi (t)=y_{n+1}\ell_{n+1}(t)+y_{n}\ell_{n}(t)+y_{n-1}\ell_{n-1}(t).
\end{equation*}

\begin{definition}
The discrete curvature at $(t_{n-1},y_{n-1}),(t_{n},y_{n}),(t_{n+1},y_{n+1})$
is 
\begin{eqnarray*}
\kappa _{n} &=&k_{n-1}k_{n}\phi ^{\prime \prime } \\
&=&\frac{2k_{n-1}}{k_{n}+k_{n-1}}y_{n+1}-2y_{n}+\frac{2k_{n}}{k_{n}+k_{n-1}}%
y_{n-1}.
\end{eqnarray*}%
Equivalently, recalling $\tau =\frac{k_{n}}{k_{n-1}}$,%
\begin{equation*}
\kappa _{n}=\frac{2}{1+\tau }y_{n+1}-2y_{n}+\frac{2\tau }{1+\tau }y_{n-1}.
\end{equation*}
\end{definition}

We define the extension of the filter (\ref{eq:FilterConstantStep}) in (\ref%
{eq:methodConstantTimestep}) to nonuniform meshes as%
\begin{equation}
y_{n+1}\Leftarrow y_{n+1}-\frac{\nu }{2}\left\{ \frac{2}{1+\tau }%
y_{n+1}-2y_{n}+\frac{2\tau }{1+\tau }y_{n-1}\right\} .
\label{eq:VariableFilter}
\end{equation}

\begin{proposition}
The filter (\ref{eq:VariableFilter}) alters the discrete curvature before, $%
\kappa ^{old}$, and after, $\kappa ^{new}$, filtering by%
\begin{equation*}
\kappa ^{new}=(1-\frac{\nu }{1+\tau })\kappa ^{old}.
\end{equation*}%
The variable timestep filter reduces, without changing sign, the discrete
curvature, $|\kappa ^{new}|<|\kappa ^{old}|$, provided%
\begin{equation*}
0<\nu <1+\tau .
\end{equation*}
\end{proposition}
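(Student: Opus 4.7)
The plan is to observe that the quantity in braces in the variable-timestep filter \eqref{eq:VariableFilter} is exactly the discrete curvature $\kappa^{old}$ defined in Definition 3.1. Hence the filter can be rewritten in the compact form
\[
y_{n+1}^{new} = y_{n+1}^{old} - \tfrac{\nu}{2}\,\kappa^{old}.
\]
This recognition is the only conceptual step; everything else is routine substitution.

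Next, I would compute $\kappa^{new}$ by plugging $y_{n+1}^{new}$ into the formula for the discrete curvature, noting that the filter touches only $y_{n+1}$ and leaves $y_n,y_{n-1}$ fixed. The coefficient in front of $y_{n+1}$ is $\tfrac{2}{1+\tau}$, so after substitution the new curvature splits as
\[
\kappa^{new} = \underbrace{\tfrac{2}{1+\tau}y_{n+1}^{old} - 2y_n + \tfrac{2\tau}{1+\tau}y_{n-1}}_{=\,\kappa^{old}} \;-\; \tfrac{2}{1+\tau}\cdot\tfrac{\nu}{2}\,\kappa^{old} = \Bigl(1 - \tfrac{\nu}{1+\tau}\Bigr)\kappa^{old},
\]
which establishes the identity in the proposition.

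For the second claim, $|\kappa^{new}|<|\kappa^{old}|$ with unchanged sign is equivalent to $0 < 1 - \tfrac{\nu}{1+\tau} < 1$. Since $\tau = k_n/k_{n-1} > 0$, the factor $1+\tau$ is positive, so clearing denominators yields $0 < \nu < 1+\tau$, as stated. There is no real obstacle here: the main point is just the observation that the filter's bracketed combination and the discrete curvature are literally the same expression, which reduces the proof to one line of algebra followed by a trivial inequality.
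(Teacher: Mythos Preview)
Your proposal is correct and follows essentially the same approach as the paper: recognize that the bracketed expression in the filter is exactly $\kappa^{old}$, substitute the filtered $y_{n+1}^{new}$ back into the curvature formula, and read off the scalar factor $1-\tfrac{\nu}{1+\tau}$. The paper's proof carries out the same algebraic rearrangement line by line, and both arguments conclude the inequality on $\nu$ from $0<1-\tfrac{\nu}{1+\tau}<1$.
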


\begin{proof}
The first claim follows by algebraic rearrangement of the filter equation (%
\ref{eq:VariableFilter})%
\begin{gather*}
y_{n+1}^{new}=y_{n+1}^{old}-\frac{\nu }{2}\left\{ \frac{2}{1+\tau }%
y_{n+1}^{old}-2y_{n}+\frac{2\tau }{1+\tau }y_{n-1}\right\} ,or \\
\frac{2}{1+\tau }y_{n+1}^{new}=\frac{2}{1+\tau }y_{n+1}^{old}-\frac{\nu }{2}%
\frac{2}{1+\tau }\left\{ \frac{2}{1+\tau }y_{n+1}^{old}-2y_{n}+\frac{2\tau }{%
1+\tau }y_{n-1}\right\} \\
\frac{2}{1+\tau }y_{n+1}^{new}-2y_{n}+\frac{2\tau }{1+\tau }y_{n-1}=\frac{2}{%
1+\tau }y_{n+1}^{old}-2y_{n}+\frac{2\tau }{1+\tau }y_{n-1} \\
-\frac{\nu }{2}\frac{2}{1+\tau }\left\{ \frac{2}{1+\tau }%
y_{n+1}^{old}-2y_{n}+\frac{2\tau }{1+\tau }y_{n-1}\right\} , \\
\kappa ^{new}=(1-\frac{\nu }{1+\tau })\kappa ^{old}.
\end{gather*}%
Curvature reduction thus holds provided 
\begin{equation*}
0<\nu \frac{1}{1+\tau }<1,
\end{equation*}%
as claimed.
\end{proof}

In the next figure the three points $(t_{n-1},y_{n-1})$, $(t_{n},y_{n})$, $%
(t_{n+1},y_{n+1})$ and their quadratic interpolant are depicted. The
discrete curvature is the second\ derivative of the interpolating quadratic
scaled by $k_{n-1}k_{n}$. For $0<\nu <1+\tau $ the filter would move the
value $y_{n+1}$\ down slightly (by $O(k^{2})$) to reduce the curvature.
\begin{figure}[!htb]
	\centering
	\includegraphics[width=12cm, height=8cm]{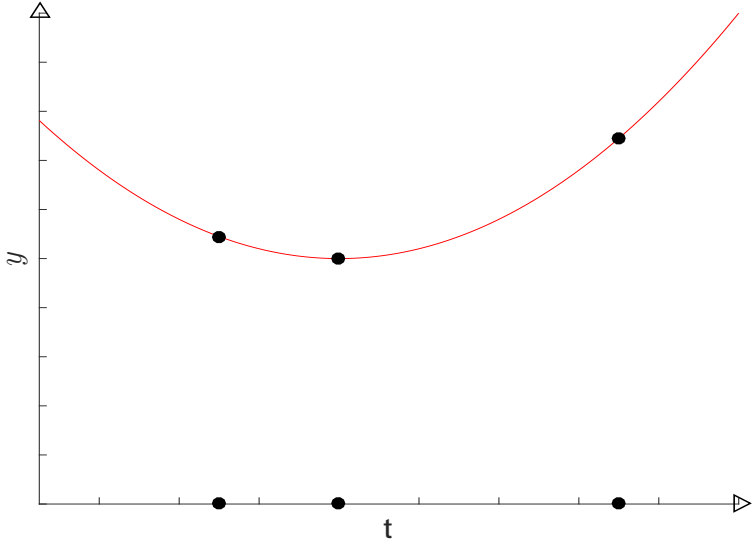}\hspace{-1.em}
	\caption{ $\kappa=k_{n-1}k_{n}\phi''(t)$}
	\label{Fig:curvature}
\end{figure}

\subsection{The local truncation error}

Since the discrete curvature and filter are well defined for variable
timesteps, the method is determined. It is, as presented in (\ref%
{eq:VariableMethod}),%
\begin{equation}
\begin{array}{ccc}
\text{Step 1} & : & \frac{y_{n+1}^{\ast }-y_{n}}{k_{n}}=f(t_{n+1},y_{n+1}^{%
\ast }), \\ 
\\
\text{Step 2} & : & y_{n+1}=y_{n+1}^{\ast }-\frac{\nu }{2}\left\{ \frac{2}{%
1+\tau }y_{n+1}^{\ast }-2y_{n}+\frac{2\tau }{1+\tau }y_{n-1}\right\} .%
\end{array}
\label{eq:VariableStepMethodAgain}
\end{equation}%
Step 2 is used to solve for $y_{n+1}^{\ast }$ and eliminate the prefilter
value by%
\begin{equation*}
y_{n+1}^{\ast }=\frac{1+\tau }{1+\tau -\nu }y_{n+1}-\nu \frac{1+\tau }{%
1+\tau -\nu }y_{n}+\frac{\tau \nu }{1+\tau -\nu }y_{n-1}.
\end{equation*}%
Eliminating $y_{n+1}^{\ast }$\ in Step 1 then gives the equivalent $2-$step
method%
\begin{gather}
\frac{1+\tau }{1+\tau -\nu }y_{n+1}-\nu \frac{1+\tau }{1+\tau -\nu }y_{n}+%
\frac{\tau \nu }{1+\tau -\nu }y_{n-1}-y_{n}=
\label{eq:VariableEquivalentLMM} \\
=k_{n}f(t_{n+1},\frac{1+\tau }{1+\tau -\nu }y_{n+1}-\nu \frac{1+\tau }{%
1+\tau -\nu }y_{n}+\frac{\tau \nu }{1+\tau -\nu }y_{n-1}).  \notag
\end{gather}%
This yields\ the following coefficients%
\begin{equation*}
\begin{array}{ccc}
\alpha _{2}=\frac{1+\tau }{1+\tau -\nu } & , & \beta _{2}=\frac{1+\tau }{%
1+\tau -\nu } \\ 
\\
\alpha _{1}=-\frac{1+\tau +\nu \tau }{1+\tau -\nu } & , & \beta _{1}=-\nu 
\frac{1+\tau }{1+\tau -\nu } \\ 
\\
\alpha _{0}=\frac{\tau \nu }{1+\tau -\nu } & , & \beta _{0}=\frac{\tau \nu }{%
1+\tau -\nu }%
\end{array}%
\end{equation*}%
The $\beta -$coefficients as given above satisfy a standard normalization
condition%
\begin{equation*}
\beta _{2}\ +\beta _{1}\ +\beta _{0}=1.
\end{equation*}%
There is a considerable amount known about $2-$step methods, even with
varying timesteps. Many of the properties of the method follow from applying
the theory in, e.g., Dahlquist \cite{D79}, Dahlquist, Liniger and Nevanlinna \cite{DLN83}, to the above and its variable
timestep analog.

We prove the following.

\begin{proposition}
The variable timestep method (\ref{eq:VariableStepMethodAgain}) is always
consistent. It is second order accurate provided 
\begin{equation*}
\nu =\frac{\tau (1+\tau )}{1+2\tau }.
\end{equation*}%
Moreover, the $LTE$ for $\nu =\frac{\tau (\tau +1)}{1+2\tau }$ is 
\begin{equation*}
LTE=\frac{-(1+4\tau )}{6\tau }k_{n}^{3}y^{\prime \prime \prime }(t_{n})+%
\mathcal{O}(k_{n}^{4}).
\end{equation*}
\end{proposition}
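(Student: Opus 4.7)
The plan is to apply the standard local truncation error (LTE) expansion to the equivalent linear multistep method (\ref{eq:VariableEquivalentLMM}), using the coefficient table already displayed in the text. Substituting the exact solution gives
\begin{equation*}
\tau_n = \alpha_2\, y(t_{n+1}) + \alpha_1\, y(t_n) + \alpha_0\, y(t_{n-1}) - k_n\bigl[\beta_2\, y'(t_{n+1}) + \beta_1\, y'(t_n) + \beta_0\, y'(t_{n-1})\bigr].
\end{equation*}
I would Taylor expand $y(t_{n\pm 1})$ and $y'(t_{n\pm 1})$ about $t_n$, using $k_{n-1} = k_n/\tau$, and collect terms in powers of $k_n$ to get $\tau_n = C_0\, y(t_n) + C_1\, k_n y'(t_n) + C_2\, k_n^2 y''(t_n) + C_3\, k_n^3 y'''(t_n) + \mathcal{O}(k_n^4)$, with each $C_j$ a rational function of $\tau$ and $\nu$.

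For consistency ($C_0 = C_1 = 0$): $\alpha_2 + \alpha_1 + \alpha_0 = 0$ follows algebraically from the stated coefficients, $\beta_2 + \beta_1 + \beta_0 = 1$ is the normalization already noted in the text, and a short common-denominator calculation gives $\alpha_2 - \alpha_0/\tau = 1$. All three hold for every admissible $\nu$, so the method is unconditionally consistent. Imposing $C_2 = 0$ for second order: the analogous identity $\beta_2 - \beta_0/\tau = 1$ (same algebra) reduces the condition to $\alpha_2 + \alpha_0/\tau^2 = 2$. Clearing the denominator $1+\tau-\nu$ produces the polynomial equation $\nu(1+2\tau) = \tau(1+\tau)$, giving the claimed $\nu = \tau(1+\tau)/(1+2\tau)$.

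The remaining LTE coefficient $C_3 = \tfrac16(\alpha_2 - \alpha_0/\tau^3) - \tfrac12(\beta_2 + \beta_0/\tau^2)$ must be evaluated at this special $\nu$. The key simplification is that $1+\tau-\nu$ collapses to $(1+\tau)^2/(1+2\tau)$, producing the closed forms $\alpha_2 = \beta_2 = (1+2\tau)/(1+\tau)$, $\alpha_0 = \beta_0 = \tau^2/(1+\tau)$, and $\beta_1 = -\tau$. A few lines of arithmetic then give $\alpha_2 - \alpha_0/\tau^3 = (2\tau-1)/\tau$ and $\beta_2 + \beta_0/\tau^2 = 2$, hence $C_3 = (2\tau-1)/(6\tau) - 1 = -(1+4\tau)/(6\tau)$, which matches the stated LTE. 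The only real obstacle is keeping the rational algebra in $\tau$ and $\nu$ tidy; the denominator identity above is what turns what looks like a messy computation into a short one.
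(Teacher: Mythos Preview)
Your proposal is correct and follows essentially the same route as the paper: both identify the equivalent two-step coefficients, verify the two consistency conditions from the Appendix LTE expansion hold identically in $\nu$, and solve the $k_n^2$ condition for the second-order value of $\nu$. Your argument is in fact slightly more complete, since the paper's proof simply asserts that the leading LTE term ``is a calculation of the first non-zero term of the LTE expansion,'' whereas you actually carry it out---and the denominator simplification $1+\tau-\nu=(1+\tau)^2/(1+2\tau)$ that you single out is exactly what makes that computation clean.
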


The relation $\nu =\tau (1+\tau )/(1+2\tau )$ for second order accuracy is
plotted below.
\begin{figure}[!htb]
	\centering
	\includegraphics[width=12cm, height=8cm]{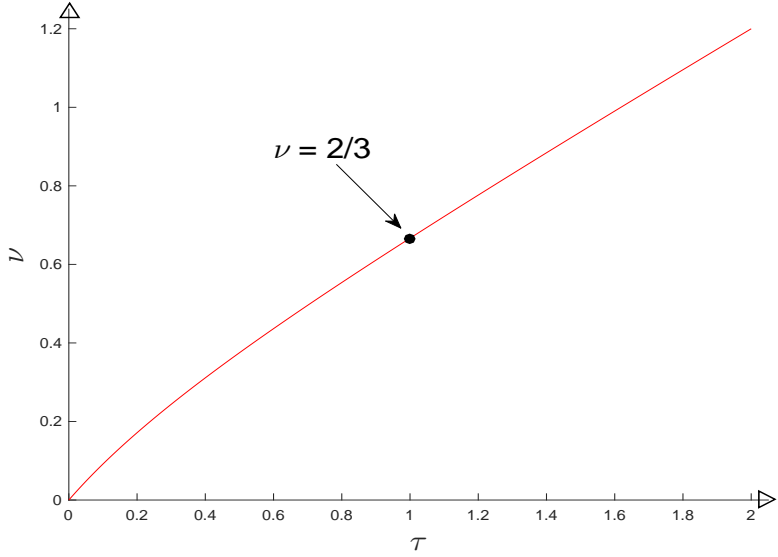} \hspace{-1.em}
	\caption{ Curvature reduction and second order choice of $\nu$}
	\label{Fig:curvaturereduction}
\end{figure}

\begin{proof}
The equivalent $2-$step method corresponds to\ the coefficients%
\begin{equation*}
\begin{array}{ccc}
\alpha _{2}=\frac{1+\tau }{1+\tau -\nu } & , & \beta _{2}=\frac{1+\tau }{%
1+\tau -\nu } \\ 
\\
\alpha _{1}=-\frac{1+\tau +\nu \tau }{1+\tau -\nu } & , & \beta _{1}=-\nu 
\frac{1+\tau }{1+\tau -\nu } \\ 
\\
\alpha _{0}=\frac{\tau \nu }{1+\tau -\nu } & , & \beta _{0}=\frac{\tau \nu }{%
1+\tau -\nu }%
\end{array}%
\end{equation*}%
By a Taylor expansion (the Appendix), the method is consistent if and only if %
the following two condition are satisfied,
\begin{equation*}
\begin{array}{ccc}
Condition \text{ }1: \alpha _{2}+\alpha _{1}+\alpha _{0}=0\\ 
\\
Condition \text{ }2: \alpha _{2}\ -\frac{1}{\tau }\alpha _{0}\ -(\beta _{2}\ +\beta _{1}\ +\beta
_{0}\ )=0
\end{array}%
\end{equation*}%
The first two consistency conditions identically holds. Indeed,%
\begin{gather*}
Condition\text{ }1: \\
\frac{1+\tau }{1+\tau -\nu }+(-\nu \frac{1+\tau }{1+\tau -\nu }-1)+\frac{%
\tau \nu }{1+\tau -\nu }=0 \\
\\
\Updownarrow\\
\\
1+\tau -\nu \lbrack 1+\tau ]-[1+\tau -\nu ]+\tau \nu =0\Leftrightarrow 0=0,
\end{gather*}%
and similarly for Condition 2%
\begin{gather*}
Condition\text{ }2: \\
\frac{1+\tau }{1+\tau -\nu }-\frac{1}{\tau }\frac{\tau \nu }{1+\tau -\nu }-(%
\frac{1+\tau }{1+\tau -\nu }-\nu \frac{1+\tau }{1+\tau -\nu }+\frac{\tau \nu 
}{1+\tau -\nu })=0 \\
\\
\Updownarrow \\
\\
\tau \lbrack 1+\tau ]-\tau \nu -(\tau \lbrack 1+\tau ]-\tau \nu \lbrack
1+\tau ]+\tau \tau \nu )=0\Leftrightarrow 0=0.
\end{gather*}%
Therefore\ the method is always consistent. The method is second order
accurate if and only if%
\begin{gather*}
\frac{1}{2}\alpha _{2}\ +\frac{1}{2\tau ^{2}}\alpha _{0}\ -\beta _{2}\ +%
\frac{1}{\tau }\beta _{0}=0 \\
\Updownarrow \\
\tau ^{2}\alpha _{2}\ +\alpha _{0}\ -2\tau ^{2}\beta _{2}\ +2\tau \beta
_{0}=0 \\
\Updownarrow \\
\tau ^{2}\frac{1+\tau }{1+\tau -\nu }+\frac{\tau \nu }{1+\tau -\nu }-2\tau
^{2}\frac{1+\tau }{1+\tau -\nu }+2\tau \frac{\tau \nu }{1+\tau -\nu }=0 \\
\Updownarrow  \\
\tau ^{2}[1+\tau ]+\tau \nu -2\tau ^{2}[1+\tau ]+2\tau ^{2}\nu =0 \\
\Updownarrow  \\
\tau \lbrack 1+\tau ]+\nu -2\tau \lbrack 1+\tau ]+2\tau \nu =0 \\
\Updownarrow  \\
\tau \lbrack 1+\tau ]+\nu \lbrack 1+2\tau ]-2\tau \lbrack 1+\tau ]=0 \\
\Updownarrow  \\
\nu \lbrack 1+2\tau ]=-\tau \lbrack 1+\tau ]+2\tau \lbrack 1+\tau ]=\tau
+\tau ^{2} \\
\Updownarrow  \\
\nu =\frac{\tau +\tau ^{2}}{1+2\tau },
\end{gather*}%
as claimed. That the $LTE$ for $\nu =\frac{\tau (\tau +1)}{1+2\tau }$ is 
\begin{equation*}
LTE=\frac{-(1+4\tau )}{6\tau }k_{n}^{3}y^{\prime \prime \prime }(t_{n})+%
\mathcal{O}(k_{n}^{4})
\end{equation*}%
is a calculation of the first non-zero term of the $LTE$ expansion.
\end{proof}

\begin{remark}
BDF2 is related to the method herein. The normal, fully variable BDF2 method
is given by%
\begin{equation}
\frac{2\tau +1}{\tau +1}y_{n+1}-(\tau +1)y_{n}+\frac{\tau ^{2}}{\tau +1}%
y_{n-1}=k_{n}f(t_{n+1},y_{n+1}).
\end{equation}%
By comparison, the equivalent, variable step linear multistep method\ herein
is%
\begin{gather*}
\frac{1+\tau }{1+\tau -\nu }y_{n+1}-\nu \frac{1+\tau }{1+\tau -\nu }y_{n}+%
\frac{\tau \nu }{1+\tau -\nu }y_{n-1}-y_{n}= \\
=k_{n}f(t_{n+1},\frac{1+\tau }{1+\tau -\nu }y_{n+1}-\nu \frac{1+\tau }{%
1+\tau -\nu }y_{n}+\frac{\tau \nu }{1+\tau -\nu }y_{n-1}),
\end{gather*}%
For $\nu =\tau (1+\tau )/(1+2\tau )$\ the LHS is again the same as (variable
step) BDF2 while the RHS differs.
\end{remark}

\subsection{Stability for variable step sizes}

As defined by Dahlquist, Liniger and Nevanlinna \cite{DLN83} equation (1.12)
p.1072, a variable step size method is $A-$stable if, when applied as a
one-leg scheme to 
\begin{equation*}
y^{\prime }=\lambda (t)y,\func{Re}(\lambda (t))\leq 0,
\end{equation*}%
solutions are always bounded for any sequence of step sizes and any such $%
\lambda (t)$. We analyze $A-$stability for variable step sizes applying the
same conditions as for constant step sizes since they were derived in Dahlquist \cite%
{D79}, Dahlquist, Liniger and Nevanlinna \cite{DLN83} for variable step, $2-$step methods. Specifically, we
apply the characterization in Dahlquist \cite{D79}, Lemma 4.1 page 3, 4 (specifically
rearranging the equation on page 4 following (4.1)), which states that the
method is $A-$stable if%
\begin{equation}
\begin{cases}
-\alpha _{1}\geq 0,\\
\\
1-2\beta _{1}\geq 0\text{ and }\\
\\
2(\beta _{2}\ -\beta_{0})+\alpha _{1}\geq 0
\\
\end{cases}
\end{equation}%
The coefficients for (\ref{eq:VariableEquivalentLMM}) are 
\begin{equation*}
\begin{array}{ccc}
\alpha _{2}=\frac{1+\tau }{1+\tau -\nu } & , & \beta _{2}=\frac{1+\tau }{%
1+\tau -\nu } \\ 
\\
\alpha _{1}=-\frac{1+\tau +\nu \tau }{1+\tau -\nu } & , & \beta _{1}=-\nu 
\frac{1+\tau }{1+\tau -\nu } \\ 
\\
\alpha _{0}=\frac{\tau \nu }{1+\tau -\nu } & , & \beta _{0}=\frac{\tau \nu }{%
1+\tau -\nu }.%
\end{array}%
\end{equation*}

\begin{proposition}
The method (\ref{eq:LMMvariable}) is $A-$stable for 
\begin{equation*}
-\frac{1+\tau }{1+2\tau }\leq \nu \leq \min \{\frac{1+\tau }{3\tau },1+\tau
\}.
\end{equation*}
\end{proposition}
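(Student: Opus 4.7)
The plan is to proceed exactly as in the proof of Proposition 2.2, applying the Dahlquist \cite{D79} characterization recalled in the statement: the method is $A$-stable precisely when the three inequalities
\[
-\alpha_1 \geq 0, \qquad 1 - 2\beta_1 \geq 0, \qquad 2(\beta_2 - \beta_0) + \alpha_1 \geq 0
\]
hold simultaneously. Since these conditions are already packaged for us by the cited lemma (valid in the variable-step setting), the work is purely algebraic: substitute the six coefficients displayed just before the proposition and simplify each inequality into an explicit bound on $\nu$ in terms of $\tau$.

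First I would note that the equivalent $2$-step method is only well defined when $1+\tau - \nu > 0$, i.e.\ $\nu < 1+\tau$. Under this assumption the common denominator appearing in every coefficient is positive and may be cleared in each inequality without flipping the direction. For the first condition, clearing denominators reduces $-\alpha_1 \geq 0$ to $1 + \tau + \nu\tau \geq 0$, equivalently $\nu \geq -(1+\tau)/\tau$. For the second, $1 - 2\beta_1 \geq 0$ becomes $(1+\tau - \nu) + 2\nu(1+\tau) \geq 0$, which simplifies to $1+\tau + \nu(1+2\tau) \geq 0$, i.e.\ $\nu \geq -(1+\tau)/(1+2\tau)$. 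For the third, $2(\beta_2 - \beta_0) + \alpha_1 \geq 0$ becomes $2(1+\tau) - 2\tau\nu - (1+\tau + \nu\tau) \geq 0$, which collapses to $1 + \tau - 3\tau\nu \geq 0$, i.e.\ $\nu \leq (1+\tau)/(3\tau)$.

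Finally I would combine the three bounds. Since $(1+\tau)/(1+2\tau) < (1+\tau)/\tau$ for $\tau > 0$, the more restrictive lower bound is the one coming from the second condition, giving $\nu \geq -(1+\tau)/(1+2\tau)$. On the upper side, the A-stability condition gives $\nu \leq (1+\tau)/(3\tau)$, and the well-posedness requirement $\nu < 1+\tau$ caps this by $1+\tau$; the intersection is $\nu \leq \min\{(1+\tau)/(3\tau),\, 1+\tau\}$, matching the claimed interval.

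I do not anticipate a real obstacle: the Dahlquist lemma converts the analytic question into three linear inequalities, and the only care required is to check the sign of the common denominator $1+\tau - \nu$ before clearing it (this is where consistency with 0-stability, analogous to Proposition 2.2, is used). A minor subtlety worth flagging is that if $\tau < 1/3$ the upper bound $(1+\tau)/(3\tau)$ exceeds $1+\tau$, so the explicit $\min$ is genuinely needed to keep $\nu$ in the regime where the equivalent multistep method is defined; for $\tau \geq 1/3$ the cubic-style bound $(1+\tau)/(3\tau)$ is automatically the binding one.
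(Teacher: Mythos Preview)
Your proposal is correct and follows essentially the same route as the paper: apply the three Dahlquist conditions, clear the common denominator $1+\tau-\nu$, and read off the linear bounds on $\nu$. The only cosmetic difference is that you impose $\nu<1+\tau$ at the outset as a well-definedness constraint, whereas the paper extracts the same constraint from the case analysis of Condition~1; either way the final interval is the same.
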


\begin{proof}
We check the 3 conditions. The first is%
\begin{gather*}
-\alpha _{1}\geq 0 \\
\Updownarrow \\
\frac{1+\tau +\nu \tau }{1+\tau -\nu }\geq
0.
\end{gather*}%
Considering cases this holds if and only if%
\begin{equation*}
-\frac{1+\tau }{\tau }\leq \nu \leq 1+\tau .
\end{equation*}%
The second is%
\begin{gather*}
1-2\beta _{1}\geq 0 \\
\Updownarrow \\
1+2\nu \frac{1+\tau }{1+\tau -\nu }\geq 0
\end{gather*}%
Since Condition 1 requires $\nu \leq 1+\tau $\ a case is eliminated and this
holds provided 
\begin{equation*}
-\frac{1+\tau }{1+2\tau }\leq \nu .
\end{equation*}%
Condition 3 is%
\begin{gather*}
\text{ }2(\beta _{2}\ -\beta _{0})+\alpha _{1}\geq 0 \\
\Updownarrow \\
2\left( \frac{1+\tau }{1+\tau -\nu }-\frac{\tau \nu }{1+\tau -\nu }\right) -%
\frac{1+\tau +\nu \tau }{1+\tau -\nu }\geq 0.
\end{gather*}%
Since Condition 1 requires $\nu \leq 1+\tau $\ this holds if and only if%
\begin{gather*}
1+\tau \geq +3\nu \tau \\
\Updownarrow\\
\nu \leq \frac{1+\tau }{3\tau }.
\end{gather*}%
Since $$\min \{\frac{1+\tau }{\tau },\frac{1+\tau }{1+2\tau }\}=\frac{1+\tau 
}{1+2\tau }$$ the result follows.
\end{proof}

Since the filter is curvature reducing only for $0<\nu <1+\tau $ it is
sensible to restrict the values to $$0<\nu \leq \min \{\frac{1+\tau }{3\tau }%
,1+\tau \}.$$ We plot next the region in Figure \ref{Fig:VariableAstability},
below the dark curve, in the $\left( \tau ,\nu \right) $ plane of variable
step $A-$stability. Also plotted, the dashed curve, is the choice of $\nu
=\nu (\tau )$ that yields second order accuracy. We see that constant or
reducing the timestep ensures $A-$stability while increasing the timestep
one must either accept first order accuracy with $A-$stability or second
order with some reduced (and yet undetermined) $A(\theta )-$stability, $%
\theta <\pi /2$.
\begin{figure}[!htb]
	\centering
	\includegraphics[width=12cm, height=8cm]{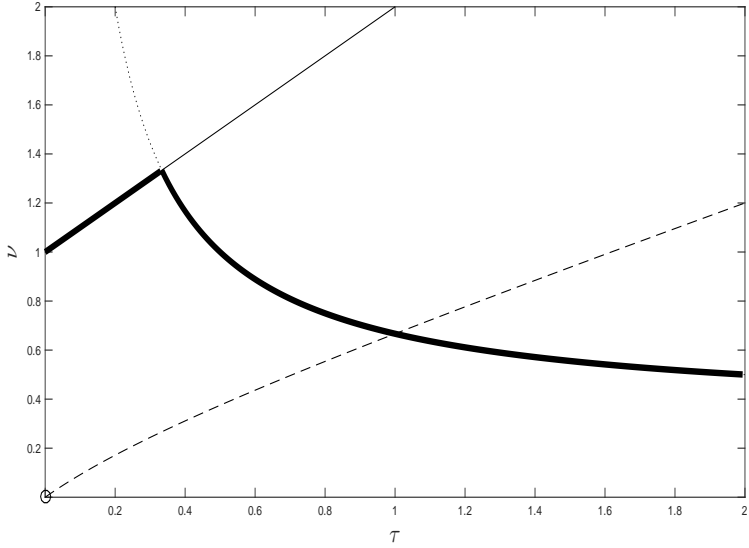} \hspace{-1.em}
	
	\caption{ A-stable for $\nu \leq $ dark\ curve, Dashed Curve = $O(k^{2})$}
	\label{Fig:VariableAstability}
\end{figure}

\begin{remark}
For variable step BDF2 the same conditions can be applied. The result after
some algebra is that the third condition for $A-$stability holds for 
\begin{equation*}
\tau \leq 1.
\end{equation*}%
This is the same constraint that occurs for the method herein when $\nu $ is
restricted to the curve of second order accuracy in Figure \ref%
{Fig:VariableAstability}.
\end{remark}

\subsection{Modified equation analysis}

\hfill \newline
Consider oscillation equation 
\begin{equation}
y^{\prime }(t)=i\omega y(t)\quad \text{and}\quad y(0)=1.
\label{eq:oscillation1}
\end{equation}%
The linear multistep method (\ref{eq:VariableMethod}) is generally a first
order approximation to oscillation equation (\ref{eq:oscillation1}) and
second order for the choice $\nu =\tau (1+\tau )/(1+2\tau )$. To delineate
the distribution of error between phase error and amplitude error we
construct the modified equation of the method for the oscillation equation.
We note that the modified equation is based on an expansion that assumes
implicitly condition $|\omega k_{n}|<1$.

\begin{proposition}
\label{propositionc1c2c3} The three term modified equation of oscillation
equation(\ref{eq:oscillation1}) for (\ref{eq:VariableMethod}) is 
\begin{equation}
\begin{split}
u^{\prime }(t)& =i\omega u(t)+k_{n}C_{1}(i\omega
)^{2}u(t)+k_{n}^{2}C_{2}(i\omega )^{3}u(t)+k_{n}^{3}C_{3}(i\omega )^{4}u(t),
\\
u(0)& =1.
\end{split}
\label{eq:modifiedEqn}
\end{equation}%
where $C_{1}$, $C_{2}$, $C_{3}$, are 
\begin{align*}
C_{1}& =\frac{\tau +\tau ^{2}-\nu -2\nu \tau }{2\tau (1+\tau -\nu )} \\
C_{2}& =\frac{2\tau ^{4}+\tau ^{3}(4-5\nu )+\nu (1+2\nu )+\tau \nu (1+6\nu
)+\tau ^{2}(2-5\nu +6\nu ^{2})}{6\tau ^{2}(1+\tau -\nu )^{2}} \\
C_{3}& =\frac{6\tau ^{6}+\tau ^{5}(18-20\nu )-\tau \nu ^{2}(31+24\nu )+\tau
^{2}\nu (4-33\nu -36\nu ^{2})}{24\tau ^{3}(1+\tau -\nu )^{3}} \\
& +\frac{\tau ^{4}(18-39\nu +23\nu ^{2})+\tau ^{3}(6-16\nu +13\nu ^{2}-24\nu
^{3})-\nu (1+8\nu +6\nu ^{2})}{24\tau ^{3}(1+\tau -\nu )^{3}}.
\end{align*}
\end{proposition}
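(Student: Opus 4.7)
The plan is to substitute $y_j = u(t_j)$ for a smooth function $u$ into the equivalent two-step form (\ref{eq:VariableEquivalentLMM}) with $f(t,y)=i\omega y$, Taylor expand about $t_n$, match powers of $k_n$ through $O(k_n^3)$, and iteratively eliminate derivatives of $u$ beyond the first using the modified equation itself.

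Concretely, since $f(t,y) = i\omega y$ makes the one-leg and linear multistep formulations coincide, the scheme reads
$$
\alpha_2 y_{n+1} + \alpha_1 y_n + \alpha_0 y_{n-1} = i\omega k_n\bigl(\beta_2 y_{n+1} + \beta_1 y_n + \beta_0 y_{n-1}\bigr),
$$
with the coefficients listed just before the statement. Writing $y_{n+1} = u(t_n+k_n)$ and $y_{n-1} = u(t_n - k_n/\tau)$ and expanding both through order $k_n^4$, the $O(1)$ term vanishes by $\alpha_2+\alpha_1+\alpha_0 = 0$, and the $O(k_n)$ balance is precisely the second consistency condition already verified in Proposition 3.3. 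What remains, after dividing by $k_n$, is a relation of the form
$$
u' = i\omega u + \sum_{j=1}^{3} k_n^{\,j}\bigl(A_j\, u^{(j+1)} + i\omega\,B_j\, u^{(j)}\bigr) + O(k_n^4),
$$
where each $A_j$ and $B_j$ is an explicit rational function of $\tau$ and $\nu$ assembled from $\alpha_2,\alpha_0,\beta_2,\beta_0$, factors $1/j!$, and powers $(-1/\tau)^j$.

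Next, I would postulate the modified equation
$$
u' = i\omega u + k_n C_1(i\omega)^2 u + k_n^2 C_2 (i\omega)^3 u + k_n^3 C_3 (i\omega)^4 u + O(k_n^4),
$$
and, since its right-hand side is a constant multiple of $u$, successive differentiation gives
$$
u^{(j)} = (i\omega)^{j}\bigl(1 + k_n C_1\, i\omega + k_n^2 C_2 (i\omega)^2 + k_n^3 C_3(i\omega)^3\bigr)^{j} u + O(k_n^4), \qquad j=2,3,4.
$$
Substituting these expansions into the relation from the previous paragraph and equating coefficients of $k_n$, $k_n^2$, $k_n^3$ separately produces three scalar equations that solve successively for $C_1$, $C_2$, $C_3$. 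The $O(k_n)$ balance yields $C_1$ in one line; the $O(k_n^2)$ balance yields $C_2$ after accounting for the feedback from the binomial expansion $(1+k_n C_1 i\omega)^2$ in $u''$; and the $O(k_n^3)$ balance yields $C_3$ after collecting contributions from both $C_1$ and $C_2$ together with the expansions $(1+\cdots)^3$ in $u'''$ and $(1+\cdots)^4$ in $u^{(4)}$.

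The main obstacle is purely the algebraic bookkeeping for $C_3$. Once a common denominator of $(1+\tau-\nu)^3$ is cleared, the numerator is a polynomial of total degree six in $\tau$ whose $\nu$-dependence runs up to $\nu^3$, and reducing it to the stated form is tedious enough to warrant symbolic computation. No new conceptual ingredient is needed beyond the Taylor–substitute–match loop outlined above.
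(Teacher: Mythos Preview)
Your proposal is correct and follows the same underlying strategy as the paper: Taylor expand the scheme about $t_n$, postulate the modified equation, replace higher derivatives of $u$ via the modified equation itself, and match coefficients of $k_n,k_n^2,k_n^3$ successively to read off $C_1,C_2,C_3$.

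The organization differs slightly. The paper first solves the recursion for $y_{n+1}$ explicitly by expanding $1/(1-i\omega k_n)$ as a geometric series, then forms the local truncation error $u(t_{n+1})-y_{n+1}$; it also keeps the correction terms in the generic form $g_j(u)$ (retaining $g_j'(u)$ in the derivative expansions) before specializing to $g_j(u)=C_j(i\omega)^{j+1}u$. You instead work directly with the implicit two-step relation and exploit linearity from the outset, writing $u^{(j)}=\mu^j u$ with $\mu=i\omega(1+k_nC_1 i\omega+\cdots)$. Your route avoids the geometric-series step and the superfluous $g_j'$ terms, so the bookkeeping is a bit lighter; the paper's version is closer to how one would proceed for a nonlinear $f$. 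Either way the algebra for $C_3$ is the only real labor, and both approaches land on the same cubic-in-$\nu$ numerator over $24\tau^3(1+\tau-\nu)^3$.
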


\begin{proof}
The general three term modified equation of oscillation equation(\ref%
{eq:oscillation1}) takes the form 
\begin{equation*}
u^{\prime }=i\omega u+k_{n}g_{1}(u)+k_{n}^{2}g_{2}(u)+k_{n}^{3}g_{3}(u)\quad %
\mbox{and}\quad u(0)=1.
\end{equation*}%

Thus, 
\begin{align*}
u^{\prime \prime }&=-\omega^2 u+i\omega k_{n}g_{1}(u)+i\omega k_{n}^{2}g_{2}(u) \\
& +i\omega k_{n}g_{1}^{\prime }(u)u+k_{n}^{2}g_{1}^{\prime
}(u)g_{1}(u)+i\omega k_{n}^{2}g_{2}^{\prime }(u)u+\mathcal{O}(k_{n}^{3}) \\
u^{\prime \prime \prime }&=-i\omega^3u-\omega ^{2}k_{n}g_{1}(u)-2\omega
^{2}k_{n}g_{1}^{\prime }(u)u+\mathcal{O}(k_{n}^{2}) \\
u^{(4)}& =w^{4}u+\mathcal{O}%
(k_{n}).
\end{align*}%
Consider (\ref{eq:VariableMethod}) applied to oscillation equation, 
\begin{equation*}
y_{n+1}-\frac{\nu \tau +1+\tau }{\tau +1}y_{n}+\frac{\nu \tau }{1+\tau }%
y_{n-1}=i\omega k_{n}y_{n+1}-i\omega k_{n}\nu y_{n}+\frac{\nu \tau }{1+\tau }%
i\omega k_{n}y_{n-1}.
\end{equation*}%
Rearrange term and eliminate $y_{n+1}$, we obtain 
\begin{equation*}
y_{n+1}=\frac{1}{1-i\omega k_{n}}\left( \frac{\nu \tau +1+\tau }{\tau +1}%
y_{n}-\frac{\nu \tau }{1+\tau }y_{n-1}-i\omega k_{n}\nu y_{n}+\frac{\nu \tau 
}{1+\tau }i\omega k_{n}y_{n-1}\right) .
\end{equation*}%
Since $|\omega k_{n}|<1$, we can use approximation of $\frac{1}{1-i\omega
k_{n}}=1+i\omega k_{n}-\omega ^{2}k_{n}^{2}-i\omega ^{3}k_{n}^{3}+\omega
^{4}k_{n}^{4}+\mathcal{O}(k_{n}^{5})$. Therefore, 
\begin{align*}
y_{n+1}& =\frac{\nu \tau +1+\tau }{\tau +1}y_{n}-\frac{\nu \tau }{1+\tau }%
y_{n-1}+\left( \frac{1+\tau -\nu }{\tau +1}\right) i\omega k_{n}y_{n} \\
& +\left( \frac{\nu -1-\tau }{\tau +1}\right) \omega ^{2}k_{n}^{2}y_{n} \\
& +\left( \frac{\nu -1-\tau }{\tau +1}\right) i\omega
^{3}k_{n}^{3}y_{n}+\left( \frac{1+\tau -\nu }{\tau +1}\right) \omega
^{4}k_{n}^{4}y_{n}+\mathcal{O}(k_{n}^{5})
\end{align*}%
The local truncation error of variable stepsize method (\ref%
{eq:VariableMethod}) with modified equations is 
\begin{align*}
LTE& =u(t_{n+1})-y_{n+1} \\
& =u(t_{n+1})-\frac{\nu \tau +1+\tau }{\tau +1}y_{n}+\frac{\nu \tau }{1+\tau 
}y_{n-1}-\left( \frac{1+\tau -\nu }{\tau +1}\right) i\omega k_{n}y_{n} \\
& -\left( \frac{\nu -1-\tau }{\tau +1}\right) \omega
^{2}k_{n}^{2}y_{n}-\left( \frac{\nu -1-\tau }{\tau +1}\right) i\omega
^{3}k_{n}^{3}y_{n} \\
& -\left( \frac{1+\tau -\nu }{\tau +1}\right) \omega ^{4}k_{n}^{4}y_{n}+%
\mathcal{O}(k_{n}^{5})
\end{align*}%
Assume that numerical solution of all previous time steps are exact i.e. $%
y_{i}=u(t_{i})$ for all $i=1\cdots n$ , 
\begin{align*}
LTE& =u(t_{n+1})-\frac{\nu \tau +1+\tau }{\tau +1}u(t_{n})+\frac{\nu \tau }{%
1+\tau }u(t_{n-1})-\left( \frac{1+\tau -\nu }{\tau +1}\right) i\omega
k_{n}u(t_{n}) \\
& -\left( \frac{\nu -1-\tau }{\tau +1}\right) \omega
^{2}k_{n}^{2}u(t_{n})-\left( \frac{\nu -1-\tau }{\tau +1}\right) i\omega
^{3}k_{n}^{3}u(t_{n}) \\
& -\left( \frac{1+\tau -\nu }{\tau +1}\right) \omega ^{4}k_{n}^{4}u(t_{n})+%
\mathcal{O}(k_{n}^{5}).
\end{align*}%
Apply the Taylor expansion of $u(t_{n-1})$, $u(t_{n+1})$ at time $t_{n}$ and
substitute $u(t_{n+1})$ ,$u(t_{n-1})$,$u^{\prime }(t_{n})$, $u^{\prime
\prime }(t_{n})$, $u^{\prime \prime \prime }(t_{n})$ and $u^{(4)}(t_{n})$ in 
$LTE$, we get 
\begin{gather*}
LTE= \\
\left[ \left( \frac{1+\tau -\nu }{1+\tau }\right) g_{1}(u(t_{n}))-\frac{1}{2}%
\omega ^{2}u(t_{n})-\frac{\nu }{2(\tau +\tau ^{2})}\omega
^{2}u(t_{n})-\left( \frac{\nu -1-\tau }{\tau +1}\right) \omega ^{2}u(t_{n})%
\right] k_{n}^{2} \\
+\bigg[\bigg(\frac{1}{2}+\frac{\nu }{2(\tau +\tau ^{2})}\bigg)i\omega
g_{1}(u(t_{n}))+\bigg(\frac{1}{2}+\frac{\nu }{2(\tau +\tau ^{2})}\bigg)%
i\omega g_{1}^{\prime }(u(t_{n}))u(t_{n}) \\
-\bigg(\frac{1}{6}-\frac{\nu }{6(\tau ^{2}+\tau ^{3})}\bigg)i\omega
^{3}u(t_{n})-\bigg(\frac{\nu -1-\tau }{\tau +1}\bigg)i\omega ^{3}u(t_{n})+%
\frac{1+\tau -\nu }{1+\tau }g_{2}(u(t_{n}))\bigg]k_{n}^{3} \\
+\bigg[\frac{1+\tau -\nu }{1+\tau }g_{3}(u(t_{n}))+\bigg(\frac{1}{2}+\frac{%
\nu }{2(\tau +\tau ^{2})}\bigg)i\omega g_{2}(u(t_{n})) \\
+\bigg(\frac{1}{2}+\frac{\nu }{2(\tau +\tau ^{2})}\bigg)%
g_{1}(u(t_{n}))g_{1}^{\prime }(u(t_{n}))+\bigg(\frac{1}{2}+\frac{\nu }{%
2(\tau +\tau ^{2})}\bigg)i\omega g_{2}^{\prime }(u(t_{n}))u(t_{n}) \\
-\bigg(\frac{1}{6}-\frac{\nu }{6(\tau ^{2}+\tau ^{3})}\bigg)\omega
^{2}g_{1}(u(t_{n}))-\bigg(\frac{1}{6}-\frac{\nu }{6(\tau ^{2}+\tau ^{3})}%
\bigg)2\omega ^{2}g_{1}^{\prime }(u(t_{n}))u(t_{n}) \\
+\bigg(\frac{1}{24}+\frac{\nu }{24(\tau ^{3}+\tau ^{4})}\bigg)w^{4}u(t_{n})-%
\bigg(\frac{1+\tau -\nu }{\tau +1}\bigg)\omega ^{4}u(t_{n})\bigg]k_{n}^{4}
\end{gather*}%
Setting coefficient of $k_{n}^{2}$ term equal to zero to find $g_{1}(u)$ 
\begin{equation*}
g_{1}(u)=\frac{2\nu \tau -\tau -\tau ^{2}+\nu }{2\tau (1+\tau -\nu )}\omega
^{2}u=C_{1}(i\omega )^{2}u.
\end{equation*}%
We use $g_{1}(u)$ and set coefficient of $k_{n}^{3}$ equal to zero, we
obtain $g_{2}(u)$ as following, 
\begin{align*}
g_{2}(u)& =-\frac{2\tau ^{4}+\tau ^{3}(4-5\nu )+\nu (1+2\nu )+\tau \nu
(1+6\nu )+\tau ^{2}(2-5\nu +6\nu ^{2})}{6\tau ^{2}(1+\tau -\nu )^{2}}i\omega
^{3}u \\
& =C_{2}(i\omega )^{3}u.
\end{align*}%
Finally, we use $g_{1}(u)$ and $g_{2}(u)$ and set coefficient of $k_{n}^{4}$
to zero, we get 
\begin{align*}
g_{3}(u)& =\bigg[\frac{6\tau ^{6}+\tau ^{5}(18-20\nu )-\tau \nu
^{2}(31+24\nu )+\tau ^{2}\nu (4-33\nu -36\nu ^{2})}{24\tau ^{3}(1+\tau -\nu
)^{3}} \\
& +\frac{\tau ^{4}(18-39\nu +23\nu ^{2})+\tau ^{3}(6-16\nu +13\nu ^{2}-24\nu
^{3})-\nu (1+8\nu +6\nu ^{2})}{24\tau ^{3}(1+\tau -\nu )^{3}}\bigg]\omega
^{4}u \\
& =C_{4}(i\omega )^{4}u.
\end{align*}
\end{proof}

\begin{remark}
The variable stepsize method (\ref{eq:VariableMethod}) is generally a first
order approximation oscillation equation (\ref{eq:oscillation1}) and fourth
order approximation to modified equation (\ref{eq:modifiedEqn}).
\end{remark}

\subsection{The phase and amplitude error}

We use modified equation to analyze phase and amplitude error. Let denote $%
e_{n}$ as error, then 
\begin{align*}
e_{n}& =y(t_{n})-y_{n} \\
& =y(t_{n})-u(t_{n})+u(t_{n})-y_{n}.
\end{align*}%
Since $u(t_{n})-y_{n}$ has fourth order approximation, then $%
y(t_{n})-u(t_{n})$ gives the leading order error generated by variable
stepsize method (\ref{eq:VariableMethod}) (see Durran \cite{D13}).

\begin{theorem}
The phase and amplitude error of variable stepsize method (\ref%
{eq:VariableMethod}) is 
\begin{equation*}
\begin{split}
R-1& =-C_{2}(\omega k_{n})^{2}+\mathcal{O}((\omega k_{n})^{4}) \\
|A|-1& =-C_{1}(\omega k_{n})^{2}+C_{3}(\omega k_{n})^{4}+\mathcal{O}((\omega
k_{n})^{6}).
\end{split}%
\end{equation*}%
where $C_{1},C_{2},C_{3}$ are defined in (\ref{propositionc1c2c3}).
\end{theorem}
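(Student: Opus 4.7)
The plan is to exploit the fact that the three-term modified equation of Proposition \ref{propositionc1c2c3} is a linear scalar ODE with constant complex coefficient, and therefore admits a closed-form exponential solution. Setting
\[
\Lambda := i\omega + k_n C_1 (i\omega)^2 + k_n^2 C_2 (i\omega)^3 + k_n^3 C_3 (i\omega)^4,
\]
the solution with $u(0)=1$ is $u(t)=e^{\Lambda t}$. Since the numerical scheme is, by the preceding remark, a fourth-order approximation to the modified equation, the leading-order phase and amplitude errors of the scheme coincide with those of $A := e^{\Lambda k_n}$ compared to the exact per-step factor $e^{i\omega k_n}$, which has modulus $1$ and phase $\omega k_n$.

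The first step is to separate $\Lambda$ into real and imaginary parts. Using $(i\omega)^2=-\omega^2$, $(i\omega)^3=-i\omega^3$, $(i\omega)^4=\omega^4$, one reads off
\[
\func{Re}(\Lambda) = -C_1 k_n \omega^2 + C_3 k_n^3 \omega^4, \qquad \func{Im}(\Lambda) = \omega - C_2 k_n^2 \omega^3.
\]
Hence $|A|=e^{\func{Re}(\Lambda) k_n}$ and the phase of $A$ is $\func{Im}(\Lambda)\, k_n$. The relative phase per step is then
\[
R \;=\; \frac{\func{Im}(\Lambda)\, k_n}{\omega k_n} \;=\; 1 - C_2 (\omega k_n)^2,
\]
yielding $R-1 = -C_2(\omega k_n)^2 + \mathcal{O}((\omega k_n)^4)$, where the remainder absorbs the contribution of terms truncated from the three-term modified equation.

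For the amplitude, Taylor-expanding the exponential produces
\[
|A| = 1 + \func{Re}(\Lambda) k_n + \tfrac{1}{2}(\func{Re}(\Lambda) k_n)^2 + \cdots = 1 - C_1 (\omega k_n)^2 + C_3 (\omega k_n)^4 + \mathcal{O}((\omega k_n)^6),
\]
since the cross term $\tfrac{1}{2}C_1^2 (\omega k_n)^4$ and any truncation errors contribute only at order $(\omega k_n)^4$ or higher, either absorbed into the $(\omega k_n)^4$ coefficient or pushed into the remainder. The main obstacle is therefore pure bookkeeping: one must verify that each additional term omitted from the modified equation contributes at least one extra power of $k_n$, so that $C_1$, $C_2$, $C_3$ alone determine the phase and amplitude errors to the stated order.
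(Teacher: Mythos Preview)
Your proposal is correct and follows essentially the same route as the paper: write the modified-equation solution as $u(t)=e^{\Lambda t}$ with $\Lambda$ as you define it, split $\Lambda$ into real and imaginary parts, read off the relative phase as $\func{Im}(\Lambda)/\omega$, and Taylor-expand $e^{\func{Re}(\Lambda)k_n}$ for the amplitude. The paper does exactly this, setting $t=k_n$ for the local comparison; your remark about the $\tfrac{1}{2}C_1^2(\omega k_n)^4$ cross term is a point the paper simply passes over in its first-order expansion of the exponential.
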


\begin{proof}
Consider exact solution of oscillation equation (\ref{eq:oscillation1}) and
modified equation (\ref{eq:modifiedEqn}), 
\begin{align*}
y(t)& =e^{i\omega t}=\cos (\omega t)+i\sin (\omega t) \\
u(t)& =e^{i\omega t+k_{n}C_{1}(i\omega )^{2}t+k_{n}^{2}C_{2}(i\omega
)^{3}t+k_{n}^{3}C_{3}(i\omega )^{4}t} \\
& =e^{-k_{n}C_{1}\omega ^{2}t+k_{n}^{3}C_{3}\omega ^{4}t}\left[ \cos (\omega
t-k_{n}^{2}C_{2}\omega ^{3}t)+i\sin (\omega t-k_{n}^{2}C_{2}\omega ^{3}t)%
\right]
\end{align*}%
Thus, phase error is 
\begin{equation*}
R-1=\frac{arg(u(t))}{arg(y(t))}-1=\frac{\omega t-k_{n}^{2}C_{2}\omega ^{3}t}{%
\omega t}-1=-C_{2}(\omega k_{n})^{2}.
\end{equation*}%
and amplitude error is 
\begin{equation*}
|A|-1=e^{-k_{n}C_{1}\omega ^{2}t+k_{n}^{3}C_{3}\omega ^{4}t}-1.
\end{equation*}%
take $t=k_{n}$ since we are looking for local error and use 
\begin{equation*}
e^{-k_{n}^{2}C_{1}\omega ^{2}+k_{n}^{4}C_{3}\omega ^{4}}\approx
1-C_{1}(\omega k_{n})^{2}+C_{3}(\omega k_{n})^{4}
\end{equation*}%
Thus, 
\begin{equation*}
|A|-1=-C_{1}(\omega k_{n})^{2}+C_{3}(\omega k_{n})^{4}.
\end{equation*}
\end{proof}

\begin{remark}
The phase and amplitude error of backward Euler method is recovered and
consistent with Durran's book result when $\nu =0$ and $\tau =1$.
\end{remark}

\begin{remark}
The variable stepsize method (\ref{eq:VariableMethod}) has second order
accuracy and fourth order amplitude error when $C_{1}=0$ i.e. $\nu =\frac{%
\tau (\tau +1)}{2\tau +1}$.
\end{remark}

\section{Some numerical tests} \hfill \\

We give a few numerical illustrations next. The first tests is for the
Lorenz system and the results are compared with backward Euler (step 1
without step 2) and BDF2. The self-adaptive RKF4-5 solution is taken as the
benchmark solution. The second tests are for a linear and nonlinear exactly
conservative systems. The third test is from Sussman \cite{S10}. His example
is one for which fully, nonlinearly implicit backward Euler preserves
Lyapunov stability of the steady state while the (commonly used in CFD)
linearly implicit method does not. This property is one reason for the fully
implicit method being used in complex applications. We test if adding Step 2
preserves this property.

\subsection{The Lorenz system}

Consider the Lorenz system: 
\begin{equation*}
\begin{array}{l}
\frac{dX}{dt}=\sigma (Y-X),\vspace{0.2cm} \\ 
\frac{dY}{dt}=-XZ+rX-Y,\vspace{0.2cm} \\ 
\frac{dY}{dt}=XY-bZ.%
\end{array}%
\end{equation*}%
The test chooses parameter values from Durran \cite{D91} ,
\begin{align*}
\sigma =12,r=12,b=6\text{ and }\\
(X_{0},Y_{0},Z_{0})=(-10,-10,25).
\end{align*}%
The system is solved over the time interval $[0,5]$ with Backward Euler,
Backward Euler plus filter and BDF2 with constant timestep. A reference
solution is obtained by adaptive RK4-5. We present solutions of the Lorenz
system for several time steps in Figure \ref{Fig:Lorenzsystem}.

\begin{figure}[!htb]
	\centering
	\includegraphics[width=0.5\textwidth]{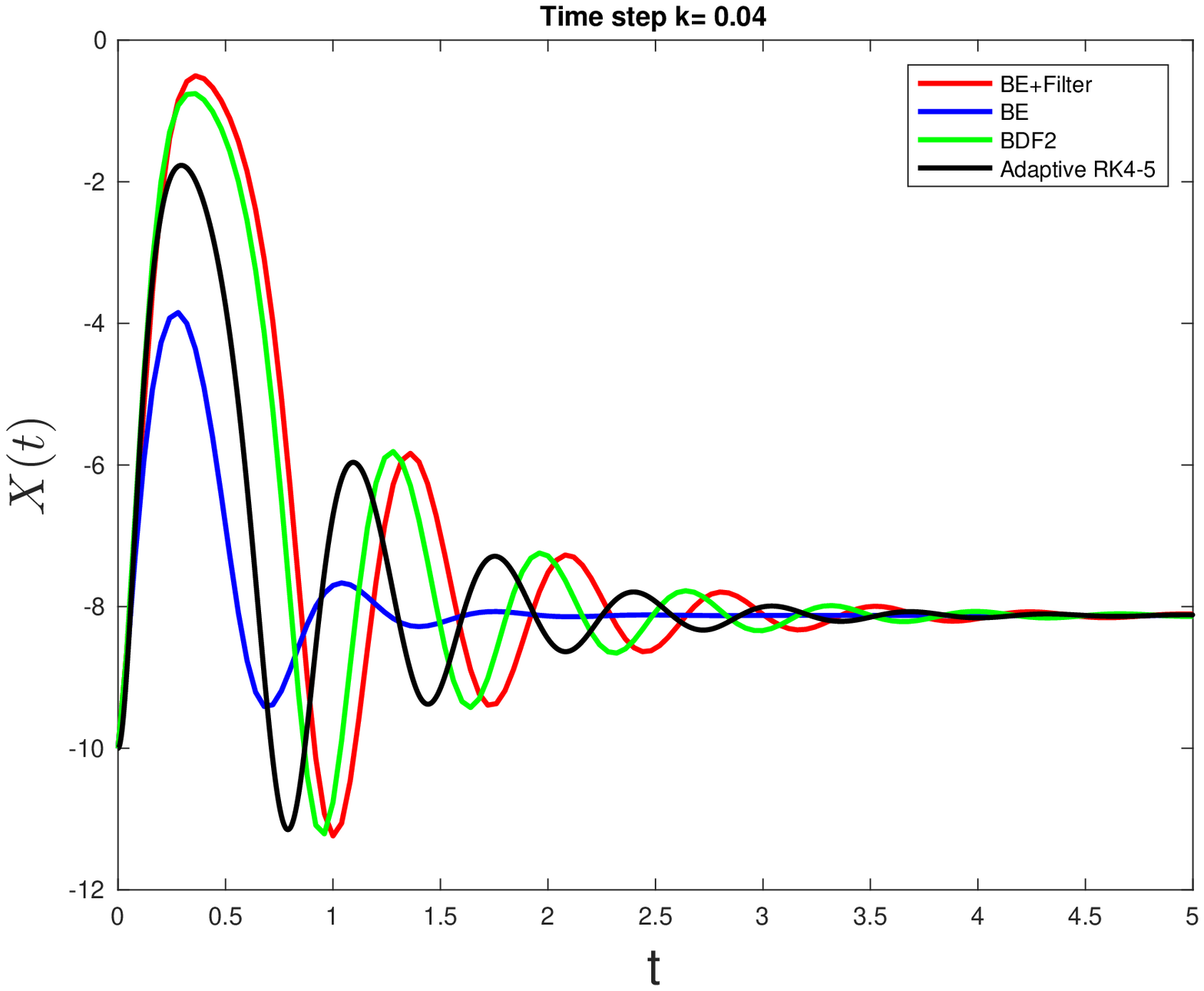} \hspace{-1.em}
	\includegraphics[width=0.5\textwidth]{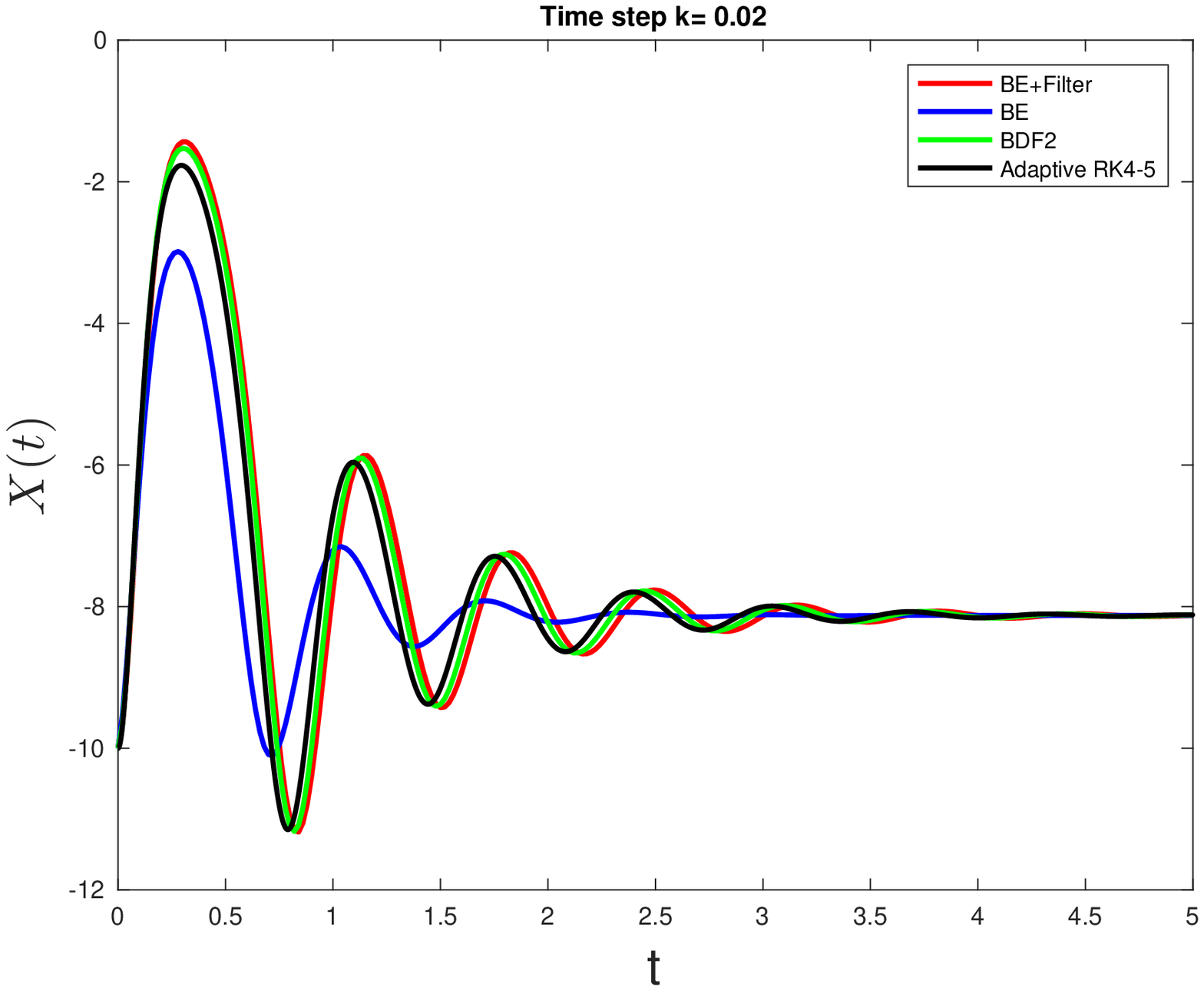} \hspace{-1.em}

	\includegraphics[width=0.5\textwidth]{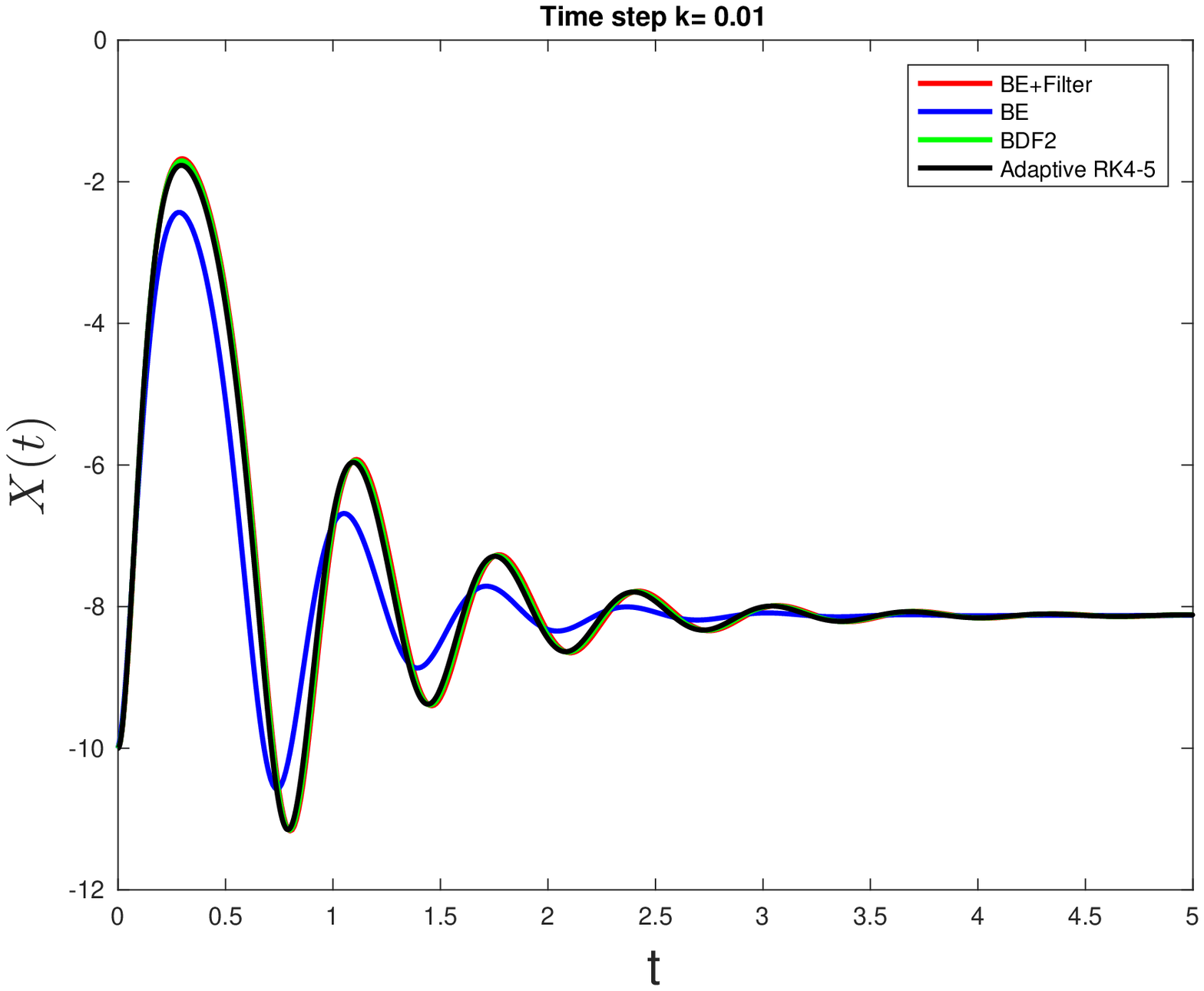}\hspace{-1.em}
	\includegraphics[width=0.5\textwidth]{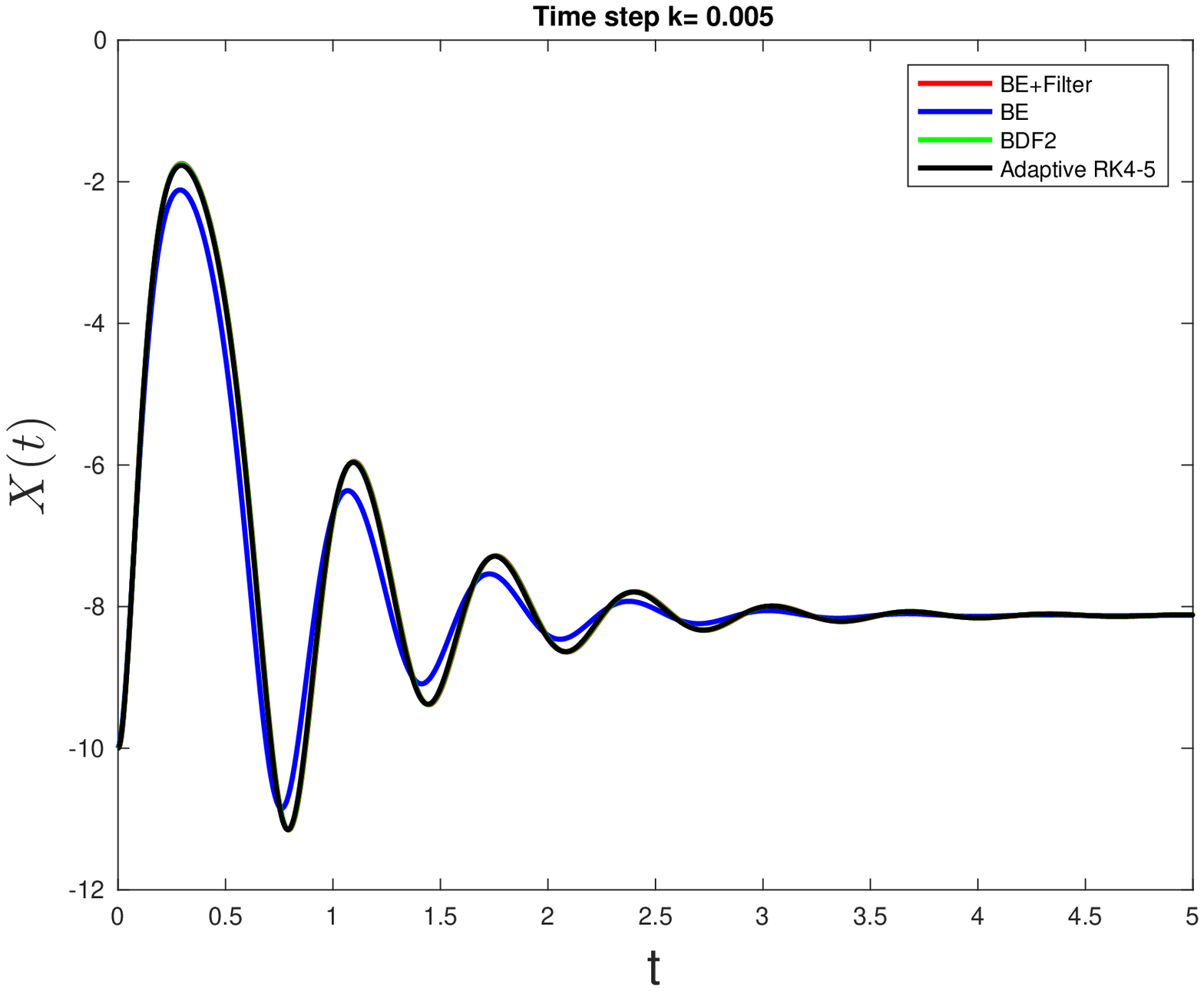}\hspace{-1.em}
	\caption{Lorenz system}
	\label{Fig:Lorenzsystem}
\end{figure}

For moderately small time steps BE over damps severely while both BDF2 and
BE+filter are accurate, even for constant timesteps. Both have a small phase
error that accelerates waves slightly. For large enough time steps, all are\
inaccurate in different ways.

\subsection{Periodic and quasi-periodic oscillations} \hfill \\
\subsubsection{Periodic oscillations}
Consider a simple pendulum problem test problem from Li and Trenchea \cite{LT15}, Williams \cite{W13}
given by 
\begin{align*}
\frac{d\theta }{dt}&=\frac{v}{L}\text{  and }\\
\frac{dv}{dt}&=-g\sin \theta ,
\end{align*}%
where $\theta ,v,L$ and $g$ denote, respectively, angular displacement,
velocity along the arc, length of the pendulum, and the acceleration due to
gravity. Set $$\theta (0)=0.9\pi, \text{ } v(0)=0, \text{ }g=9.8 \text{ and } L=49$$ to observe the long-time behavior of the
numerical solutions in Figure \ref{Fig:SimplePendulum}.

\begin{figure}[!htb]
	\centering
	\includegraphics[width=0.5\textwidth]{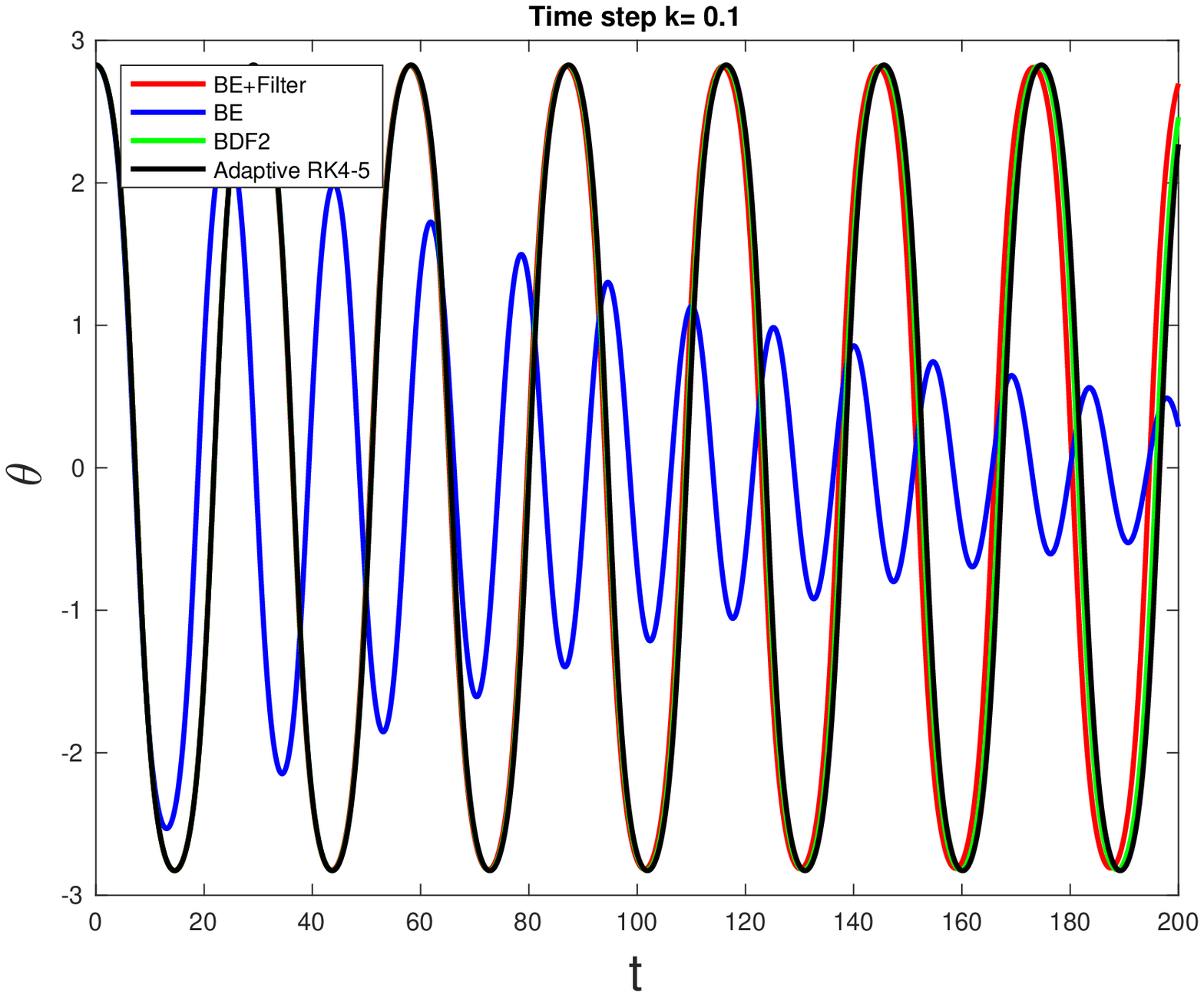} \hspace{-1.em}
	\includegraphics[width=0.5\textwidth]{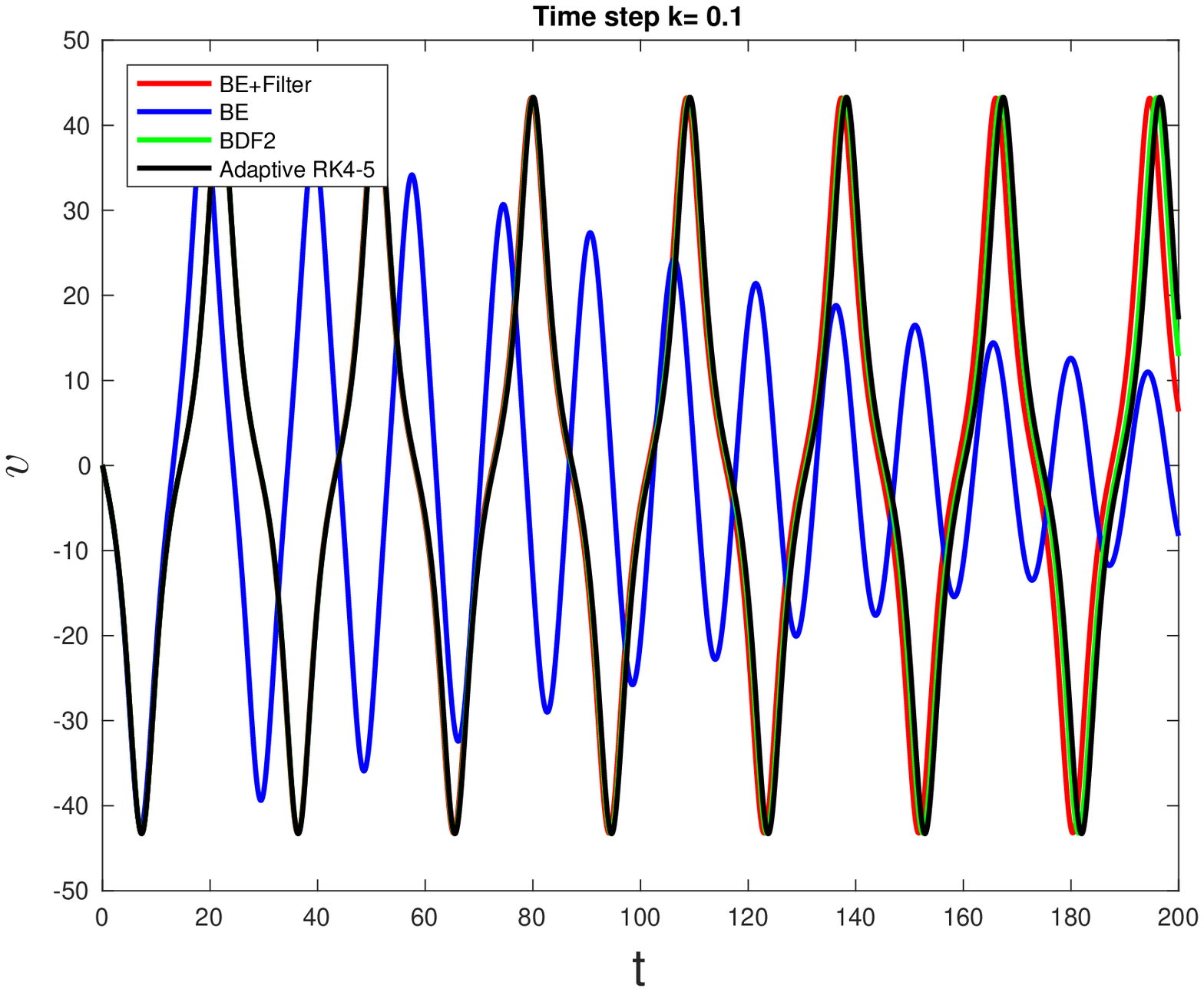} \hspace{-1.em}
	
	\caption{Simple pendulum}
	\label{Fig:SimplePendulum}
\end{figure}

Consistently with test 1, the phase and amplitude errors in both BE+filter
and BDF2 are small while both are large for BE. Adding the filter step to BE
has greatly increased accuracy.

\subsubsection{Quasi-periodic oscillations}
We solve the IVP written as a first order system

\begin{gather*}
x^{\prime \prime \prime \prime }+(\pi ^{2}+1)x^{\prime \prime }+\pi
^{2}x=0,0<t<20, \\
x(0)=2,x^{\prime }(0)=0,x^{\prime \prime }(0)=-(1+\pi ^{2}),x^{\prime \prime
\prime }(0)=0.
\end{gather*}%
This has exact solution $x(t)=cos(t)+cos(\pi t)$, the sum of two periodic
functions with incommensurable periods, hence quasi-periodic, Corduneanu \cite{C89}. We
solve using BE+filter with fixed timestep $k=0.1$ and with a rudimentary
adaptive BE+Filter method. In the latter we use initial timestep $k=0.1$,
the heuristic estimator (\ref{Eq:Estimator}), tolerance $TOL=0.1$, $0.4$ and adapt
by timestep halving and doubling. The plots of both with the exact solution
are next in Figure \ref{Fig:QuasiPeriodic}.
\begin{figure}[!htb]
	\centering
	\includegraphics[width=6cm, height=6cm]{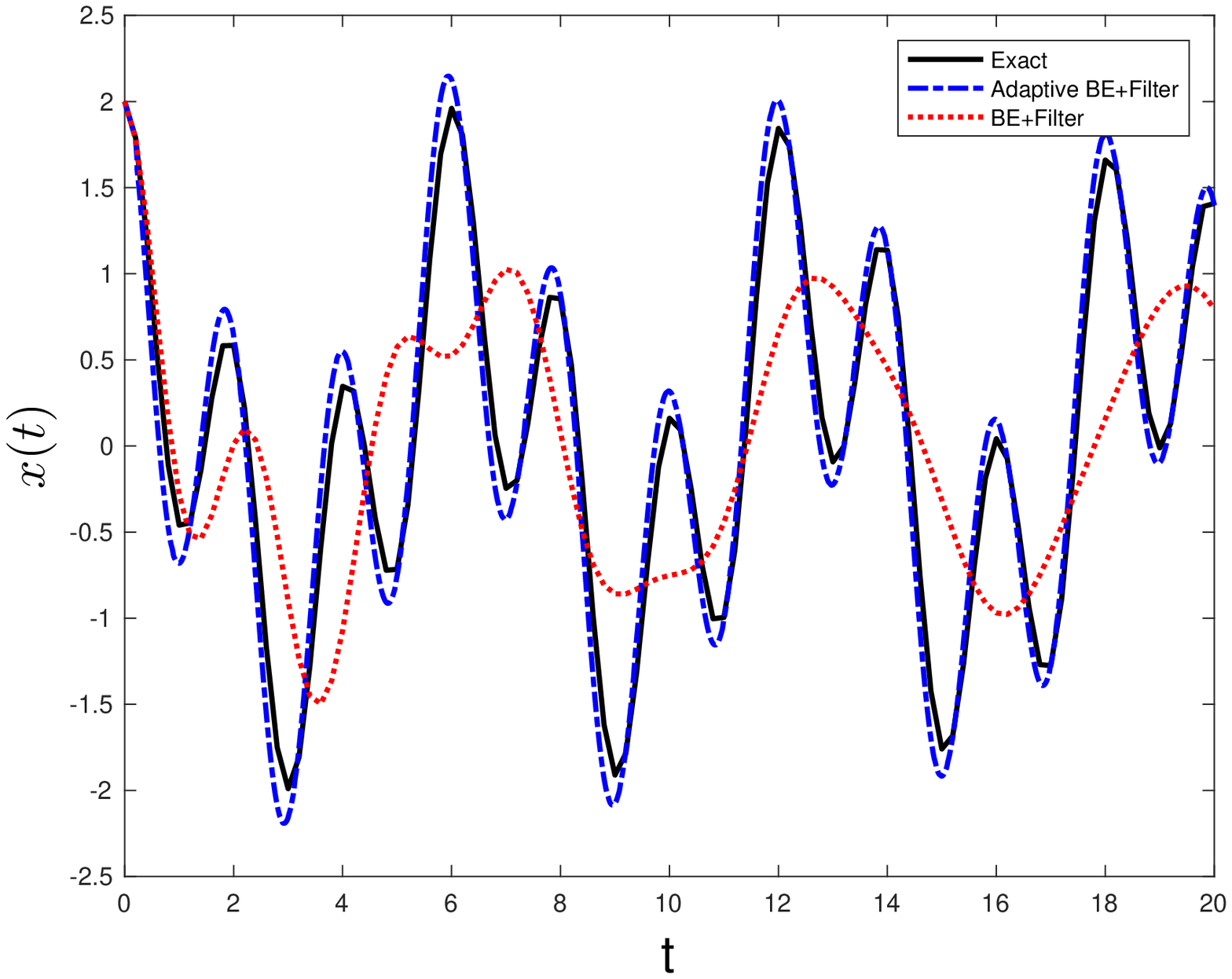} \hspace{-1.em}
	\includegraphics[width=6cm, height=6cm]{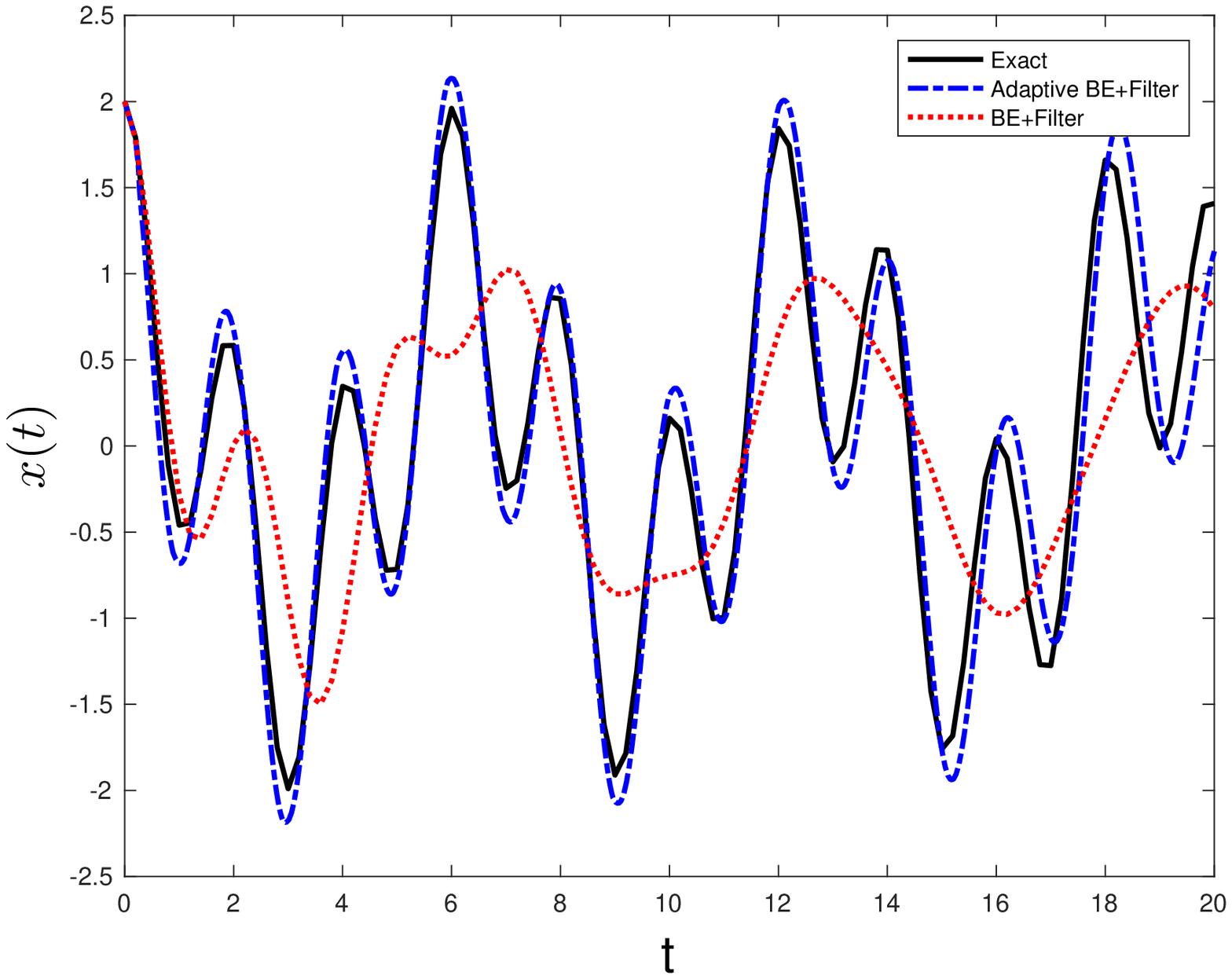}\hspace{-1.em}
	
	\caption{ Quasi-periodic oscillations with TOL = $0.1$(left) and $0.4$(right).}
	\label{Fig:QuasiPeriodic}
\end{figure}

This test suggests that quasi-periodic oscillations are a more challenging test than
periodic. Adaptivity is required but even simple adaptivity suffices to
obtain an accurate solution.

\subsection{The example of Sussman}
Next we present solutions to the test problem of Sussman [S10]. %
He pointed out that the fully, nonlinearly implicit backward Euler %
method approaches steady state while the linearly implicit only %
does so for sufficiently small timestep. In all cases the approximate %
solution approaches steady state as does the behavior of the true %
solution. The nonlinear system is

\begin{equation*}
\begin{array}{l}
\frac{du_{1}}{dt}+u_{2}u_{2}+u_{1}=1,\vspace{0.2cm} \\ 
\frac{du_{2}}{dt}-u_{2}u_{1}+u_{2}=1,%
\end{array}%
\end{equation*}%
with initial value $(u_{1},u_{2})=(0,0)$.%
\begin{figure}[!htb]
	\centering
	\includegraphics[width=6cm, height=6cm]{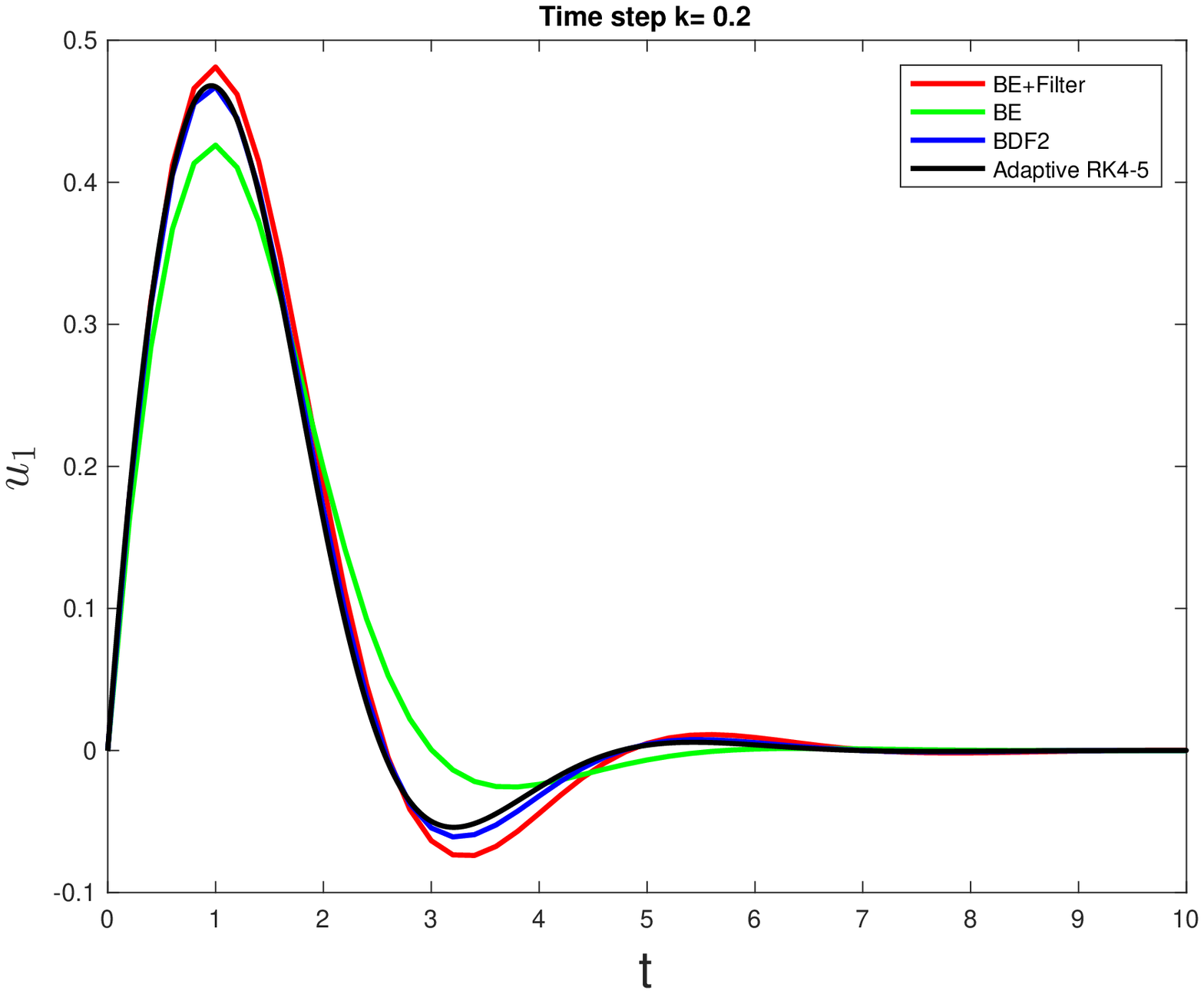} \hspace{-1.em}
	\includegraphics[width=6cm, height=6cm]{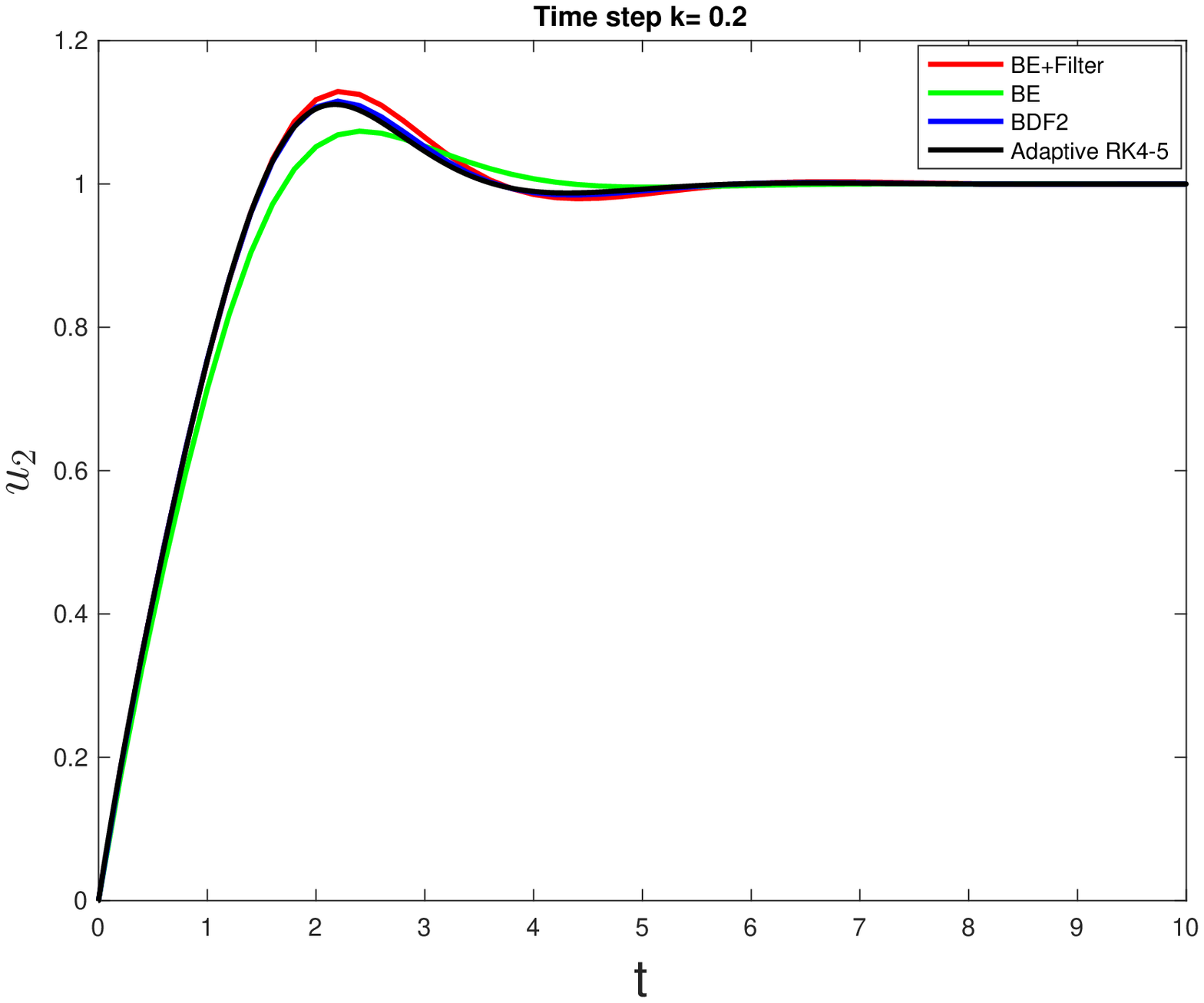} \hspace{-1.em}
	
	\includegraphics[width=6cm, height=6cm]{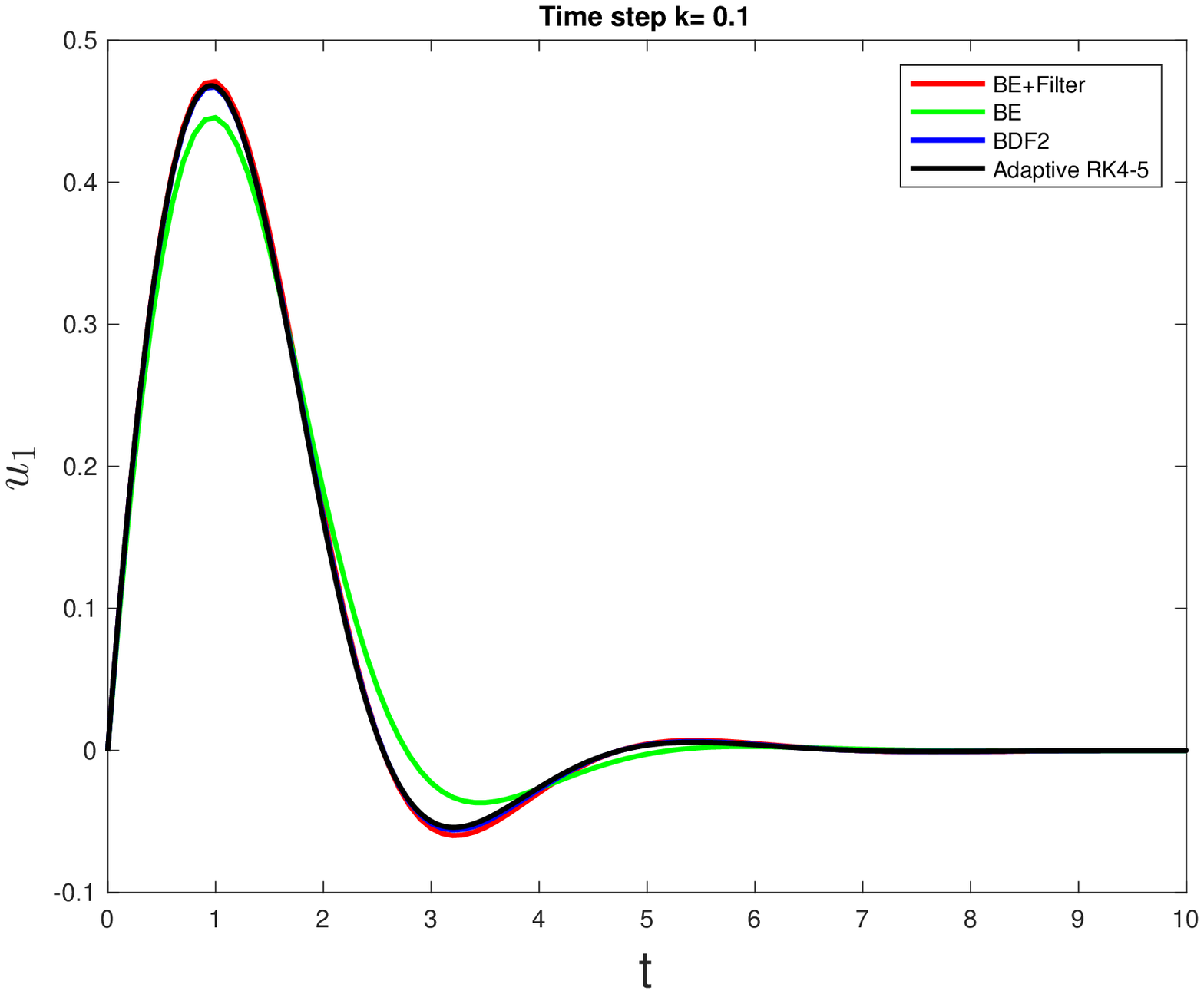} \hspace{-1.em}
	\includegraphics[width=6cm, height=6cm]{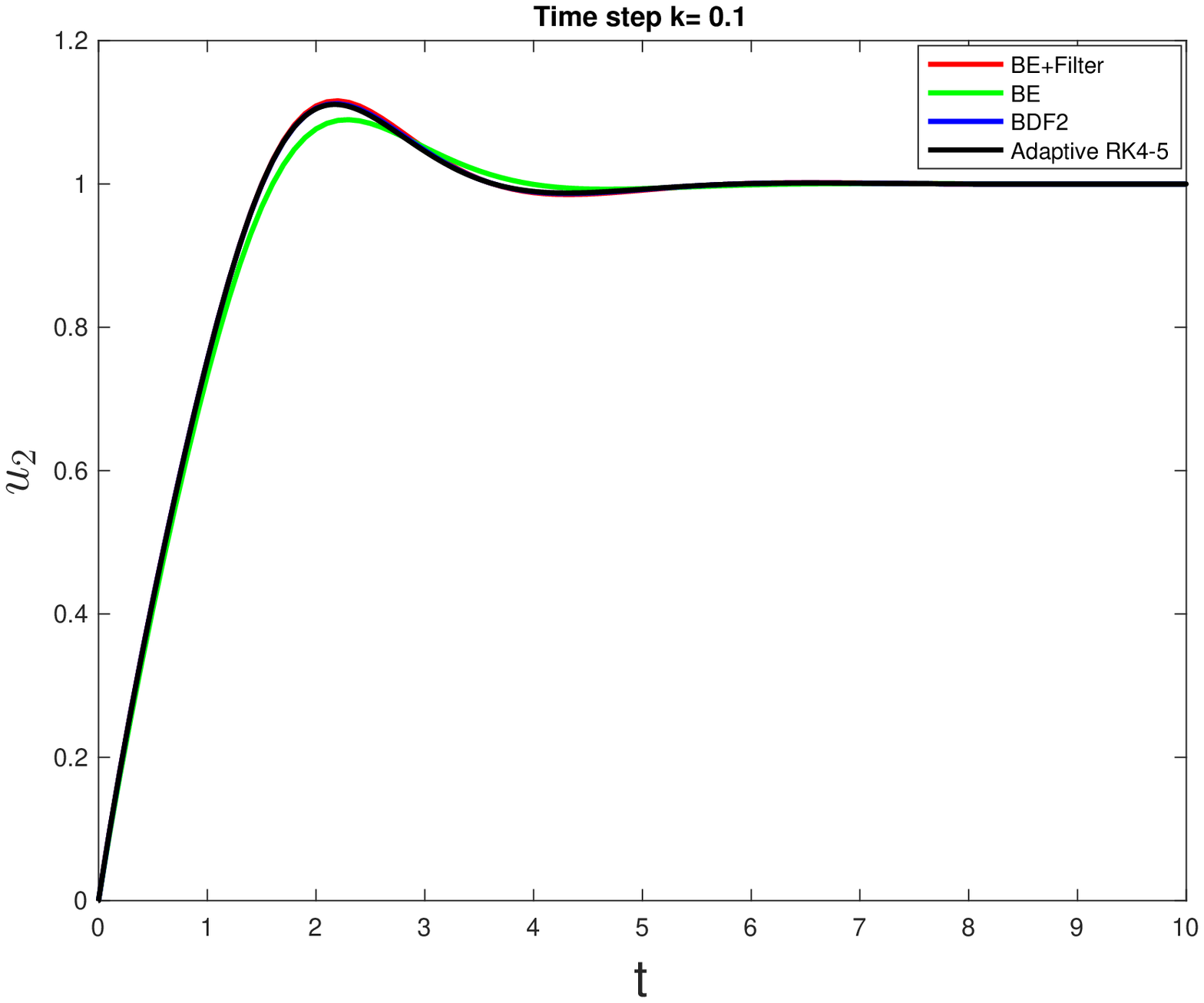} \hspace{-1.em}
	
	\includegraphics[width=6cm, height=6cm]{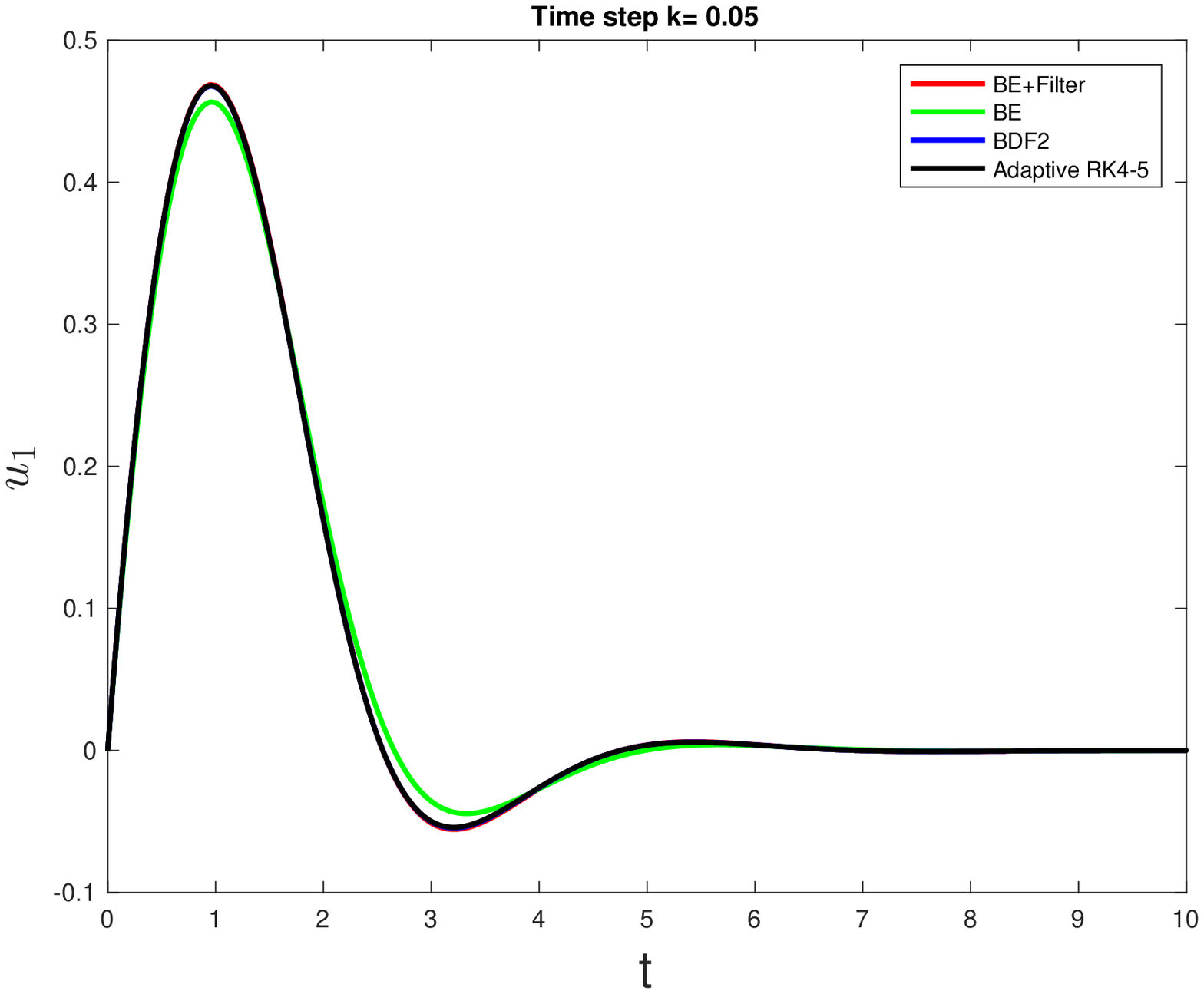}\hspace{-1.em}
	\includegraphics[width=6cm, height=6cm]{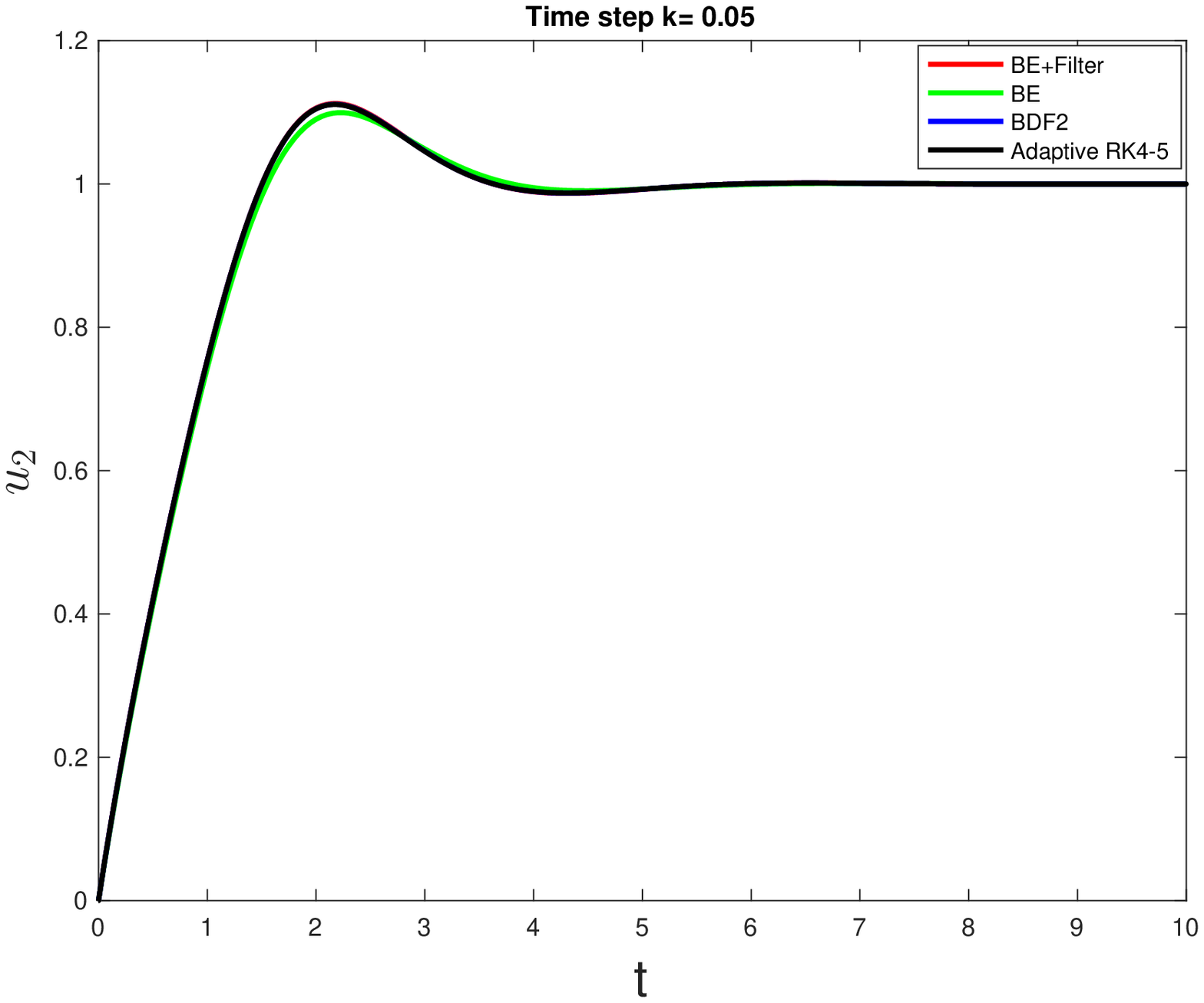}\hspace{-1.em}
	
	\caption{ The example of Sussman}
	\label{Fig:SussmanExample}
\end{figure}

In all cases, adding the filter step did not alter Lyapunov\ stability of
the equilibrium state.

\section{Conclusions}

While a satisfactory, variable timestep\ BDF2 method exists, the combination
of backward Euler plus a curvature reducing time filter gives another option
that is conceptually clear and\ easily added by one additional line to a
legacy code based on the implicit method. Both the theory and the tests both
show that adding the filter step to backward Euler greatly increases
accuracy.

\section{Appendix: 2 step methods} \hfill \\

The results were often developed by applying theory of $2-$step methods. We
collect here in this appendix some of the results applied. For constant
timestep, in the standard form of a $2-$step method is 
\begin{equation}
\alpha _{2}y_{n+1}+\alpha _{1}y_{n}+\alpha _{0}y_{n-1}=kf(t_{n+1},\beta
_{2}y_{n+1}+\beta _{1}y_{n}+\beta _{0}y_{n-1}).
\end{equation}%
This can be normalized in various ways; one normalization is to rescale so
the $\beta -$co\-efficients satisfy the standard normalization condition%
\begin{equation*}
\beta _{2}\ +\beta _{1}\ +\beta _{0}=1.
\end{equation*}%
The local truncation error is developed by expanding in a standard way in
Taylor series, giving%
\begin{gather*}
LTE=\left[ \alpha _{2}\ +\alpha _{1}\ +\alpha _{0}\right] y(t_{n})+k[\alpha
_{2}\ -\alpha _{0}\ -(\beta _{2}\ +\beta _{1}\ +\beta _{0}\ )]y^{\prime
}(t_{n}) \\
+k^{2}[\frac{\alpha _{2}}{2}+\frac{\alpha _{0}}{2}-\beta _{2}\ +\beta
_{0}]y^{\prime \prime }(t_{n})+k^{3}[\frac{\alpha _{2}}{6}-\frac{\alpha _{0}%
}{6}-\frac{\beta _{2}}{2}\ -\frac{\beta _{0}}{2}]y^{\prime \prime \prime
}(t_{n})+\mathcal{O}(k^{4}).
\end{gather*}%
A method is consistent if and only if the first two terms in the LTE
expansion are zero and second order accurate if and only if the third term
vanishes.

Next consider variable timesteps. The 2-step method for variable timestep is 
\begin{equation}
\alpha _{2}y_{n+1}+\alpha _{1}y_{n}+\alpha _{0}y_{n-1}=k_{n}f(t_{n+1},\beta
_{2}y_{n+1}+\beta _{1}y_{n}+\beta _{0}y_{n-1})
\end{equation}%
where the coefficients will depend on $\tau $ where%
\begin{equation*}
\tau =\frac{k_{n}}{k_{n-1}}\text{ and thus }k_{n}=\tau k_{n-1}.
\end{equation*}%
The LTE expansion for $t_{n+1}-t_{n}=k_{n}$, $t_{n}-t_{n-1}=k_{n-1}$ is now%
\begin{gather*}
LTE=\left[ \alpha _{2}\ +\alpha _{1}\ +\alpha _{0}\right] y(t_{n})+ \\
k_{n}[\alpha _{2}-\alpha _{0}\frac{1}{\tau }\ -(\beta _{2}\ +\beta _{1}\
+\beta _{0}\ )]y^{\prime }(t_{n})+k_{n}{}^{2}[\frac{1}{2}\alpha _{2}\ +\frac{%
1}{2\tau ^{2}}\alpha _{0}\ -\beta _{2}\ +\frac{1}{\tau }\beta _{0}]y^{\prime
\prime }(t_{n})+ \\
+k_{n}{}^{3}[\frac{1}{6}\alpha _{2}\ -\frac{1}{6\tau ^{3}}\alpha _{0}\ -%
\frac{1}{2}\beta _{2}\ -\frac{1}{2\tau ^{2}}\beta _{0}]y^{\prime \prime
\prime }(t_{n})+\mathcal{O}(k_{n}^{4})
\end{gather*}%
The method is consistent if and only if the first two terms are zero and
second order accurate if and only if the third term vanishes:%
\begin{eqnarray*}
Consistent &\Leftrightarrow &\left\{ 
\begin{array}{c}
\alpha _{2}+\alpha _{1}+\alpha _{0}=0\text{ }and \\ 
\\
\alpha _{2}\ -\frac{1}{\tau }\alpha _{0}\ -(\beta _{2}\ +\beta _{1}\ +\beta
_{0}\ )=0%
\end{array}%
\right. \\
\\
Second \text{ }order &\Leftrightarrow &\frac{1}{2}\alpha _{2}\ +\frac{1}{2\tau ^{2}}%
\alpha _{0}\ -\beta _{2}\ +\frac{1}{\tau }\beta _{0}=0.
\end{eqnarray*}

As defined by, e.g., Dahlquist, Liniger and Nevanlinna \cite{DLN83} equation
(1.12) p.1072, a variable step size method is $A-$stable if, when applied as
a one-leg scheme to 
\begin{equation*}
y^{\prime }=\lambda (t)y,\text{ } \func{Re}(\lambda (t))\leq 0,
\end{equation*}%
solutions are always bounded for any sequence of step sizes. Conditions for
variable stepsize, $A-$stability were derived for variable step, 2-step
methods in Dahlquist \cite{D79}. The characterization in Dahlquist \cite{D79}, Lemma 4.1 page 3,
4 (specifically rearranging the equation on page 4 following (4.1)), states
that the method is $A-$stable if%
\begin{equation}
\begin{cases}
-\alpha _{1}\geq 0,\\
\\
1-2\beta _{1}\geq 0\text{ and }\\
\\
2(\beta _{2}\ -\beta
_{0})+\alpha _{1}\geq 0.
\end{cases}
\end{equation}

\textbf{Adaptivity. }The combination of backward Euler plus filter lends
itself to adaptive implementation. There are various choices that must be
made in such an implementation. We have purposefully made the simplest one
of each option. With simple timestep halving and doubling the general
adaptive method implemented was as follows.%
\begin{eqnarray*}
\text{Given}\text{: } &&y_{n},y_{n-1},k_{n-1},k_{n}\text{ and }Tol \\
\text{Choose}\text{: } &&\nu =\frac{k_n(k_n+k_{n-1})}{k_{n-1}(2k_n+k_{n-1})\ } \\
\text{Compute}\text{: } &&y_{n+1}^{prefilter},y_{n+1}^{postfilter}\text{ by}
\\
\frac{y_{n+1}-y_{n}}{k_{n}} &=&f(t_{n+1},y_{n+1}), \\
y_{n+1} &\Leftarrow &y_{n+1}-\frac{\nu }{2}\left\{ \frac{2k_{n-1}}{%
k_{n}+k_{n-1}}y_{n+1}-2y_{n}+\frac{2k_{n}}{k_{n}+k_{n-1}}y_{n-1}\right\} \\
EST &=&|y_{n+1}^{prefilter}-y_{n+1}^{postfilter}| \\
\text{If}\text{: } &&EST\geq Tol\text{, }k_{n}\Leftarrow
k_{n}/2\text{\ and repeat step} \\
\text{ If}\text{: } &&EST\leq \frac{Tol}{8}\text{, }%
k_{n+1}=2k_{n}\text{\ and next step} \\
\text{Else}\text{: } &&k_{n+1}=k_{n}\text{\ and next step }
\end{eqnarray*}

\end{document}